\def\bu{\bullet}
\def\marker{\>\hbox{${\vcenter{\vbox{
    \hrule height 0.4pt\hbox{\vrule width 0.4pt height 6pt
    \kern6pt\vrule width 0.4pt}\hrule height 0.4pt}}}$}\>}
\def\gpic#1{#1
     \smallskip\par\noindent{\centerline{\box\graph}} \medskip}

\def\bu{\bullet}
\def\marker{\>\hbox{${\vcenter{\vbox{
    \hrule height 0.4pt\hbox{\vrule width 0.4pt height 6pt
    \kern6pt\vrule width 0.4pt}\hrule height 0.4pt}}}$}\>}
\def\gpic#1{#1
     \smallskip\par\noindent{\centerline{\box\graph}} \medskip}
\documentclass[12pt]{article}
\usepackage{array,amsmath,amssymb,amsthm}  				    
\usepackage{fullpage,lineno}
\renewcommand{\baselinestretch}{1.15}		

\begin{document}


\input epsf.tex
\def\sp{\bigskip}
\def\ti{\\ \hglue \the \parindent}
\def\ce#1{\LP\medskip\centerline{#1}\medskip}
\def\LP{\par\noindent}

\newtheorem{theorem}{Theorem}[section]
\newtheorem*{theorem*}{Theorem}
\newtheorem{lemma}[theorem]{Lemma}
\newtheorem{corollary}[theorem]{Corollary}
\newtheorem{prop}[theorem]{Proposition}
\newtheorem{conjecture}[theorem]{Conjecture}
\newtheorem{observation}[theorem]{Observation}
\newtheorem{claim}[theorem]{Claim}
\newtheorem{alg}[theorem]{Algorithm}
\theoremstyle{definition}
\newtheorem{remark}[theorem]{Remark}
\newtheorem{example}[theorem]{Example}
\newtheorem{definition}[theorem]{Definition}
\def\PF{\LP{\it Proof.}}
\def\qed{\ifhmode\unskip\nobreak\hfill$\Box$\bigskip\fi \ifmmode\eqno{Box}\fi}

\def\al{\alpha} \def\be{\beta}  \def\ga{\gamma} \def\dlt{\delta}
\def\eps{\epsilon} \def\th{\theta}  \def\ka{\kappa} \def\lmb{\lambda}
\def\sg{\sigma} \def\om{\omega}
\def\nul{\varnothing} 
\def\st{\colon\,}   
\def\MAP#1#2#3{#1\colon\,#2\to#3}
\def\VEC#1#2#3{#1_{#2},\ldots,#1_{#3}}
\def\VECOP#1#2#3#4{#1_{#2}#4\cdots #4 #1_{#3}}
\def\SE#1#2#3{\sum_{#1=#2}^{#3}}  \def\SGE#1#2{\sum_{#1\ge#2}}
\def\PE#1#2#3{\prod_{#1=#2}^{#3}} \def\PGE#1#2{\prod_{#1\ge#2}}
\def\UE#1#2#3{\bigcup_{#1=#2}^{#3}}
\def\CH#1#2{\binom{#1}{#2}} \def\MULT#1#2#3{\binom{#1}{#2,\ldots,#3}}
\def\FR#1#2{\frac{#1}{#2}}
\def\FL#1{\left\lfloor{#1}\right\rfloor} \def\FFR#1#2{\FL{\frac{#1}{#2}}}
\def\CL#1{\left\lceil{#1}\right\rceil}   \def\CFR#1#2{\CL{\frac{#1}{#2}}}
\def\Gb{\overline{G}}
\def\NN{{\mathbb N}} \def\ZZ{{\mathbb Z}} 
\def\esub{\subseteq}
\def\cD{{\mathcal D}}

\def\B#1{{\bf #1}}      \def\R#1{{\rm #1}}
\def\I#1{{\it #1}}      \def\c#1{{\cal #1}}
\def\C#1{\left | #1 \right |}    
\def\CC#1{\left \Vert #1 \right \Vert}    
\def\P#1{\left ( #1 \right )}    

\def\er{{\rm e}}

\title{Degree lists and connectedness are $3$-reconstructible for graphs
with at least seven vertices}

\author{ 
Alexandr V. Kostochka\thanks{University of Illinois at Urbana--Champaign,
Urbana IL 61801, and Sobolev Institute of Mathematics, Novosibirsk 630090,
Russia: \texttt{kostochk@math.uiuc.edu}.  Research supported in part by NSF
grants DMS-1600592 and grants 18-01-00353A and 16-01-00499  of the Russian
Foundation for Basic Research.}\,,
Mina Nahvi\thanks{University of Illinois at Urbana--Champaign,
Urbana IL 61801: \texttt{mnahvi2@illinois.edu}.}\,,
Douglas B. West\thanks{Zhejiang Normal University, Jinhua, China 321004
and University of Illinois at Urbana--Champaign, Urbana IL 61801:
\texttt{dwest@math.uiuc.edu}.
Research supported by National Natural Science Foundation of China grant NNSFC
11871439.}\,,
Dara Zirlin\thanks{University of Illinois at Urbana--Champaign, Urbana IL 61801:
\texttt{zirlin2@illinois.edu}.}
}
\date{\today}
\maketitle

\begin{abstract}
The {\it $k$-deck} of a graph is the multiset of its subgraphs induced by $k$
vertices.  A graph or graph property is {\it $l$-reconstructible} if it is
determined by the deck of subgraphs obtained by deleting $l$ vertices.  We show
that the degree list of an
$n$-vertex graph is $3$-reconstructible when $n\ge7$, and the threshold on $n$
is sharp.  Using this result, we show that when $n\ge7$ the $(n-3)$-deck also
determines whether an $n$-vertex graph is connected; this is also sharp.
These results extend the results of Chernyak and Manvel, respectively, that the
degree list and connectedness are $2$-reconstructible when $n\ge6$, which are
also sharp.

\smallskip
\noindent
{MSC Codes:} 05C60, 05C07\\
{Key words: graph reconstruction, deck, reconstructibility, connected}
\end{abstract}

\baselineskip16pt

\section{Introduction} \label{sec:intro}
A {\it card} of a graph $G$ is a subgraph of $G$ obtained by deleting one
vertex.  Cards are unlabeled, so only the isomorphism class of a card is given.
The {\it deck} of $G$ is the multiset of all cards of $G$.  A graph is
{\it reconstructible} if it is uniquely determined by its deck. 
The famous Reconstruction Conjecture was first posed in 1942.

\begin{conjecture}[The Reconstruction Conjecture; Kelly~\cite{kel1,kel2},
Ulam~\cite{U}]
Every graph having more than two vertices is reconstructible. 
\end{conjecture}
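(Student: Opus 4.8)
The plan is to build on the one tool that is available for every deck, namely \emph{Kelly's counting lemma}. For any graph $H$ with fewer vertices than $G$, each copy of $H$ in $G$ survives in exactly $n-|V(H)|$ of the $n$ cards, since the copy persists in $G-v$ precisely when $v$ avoids its vertex set; hence summing the number of copies of $H$ over all cards and dividing by $n-|V(H)|$ recovers the number of copies of $H$ in $G$. Thus every count of a \emph{proper} subgraph, and (by M\"obius inversion over the subgraph lattice) of a proper \emph{induced} subgraph, is reconstructible. From this one immediately obtains the number of edges, the full degree list, the number of every small fixed subgraph, and hence the graph polynomials expressible through such counts. The first concrete step is therefore to assemble this library of reconstructible invariants and use it to constrain the family of graphs sharing the deck of $G$.

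The second step is to reduce to the connected case. Disconnected graphs are reconstructible by a classical argument of Kelly: the multiset of components of $G$ is recoverable from subgraph counts via a unique-factorization / generating-function argument, since every component is a proper subgraph and so is counted by the lemma. This disposes of the disconnected case entirely and lets me assume $G$ is connected for the remainder. The third step is to push as far as possible with density. The edge-reconstruction theorems of Lov\'asz and of M\"uller show that a graph is determined by its deck once it is sufficiently dense (more than half of all possible edges, or more than $n\log_2 n$ edges), and Nash--Williams' counting lemma supplies a framework for comparing two candidate reconstructions edge by edge. I would combine these with the invariant library to settle all dense connected graphs, and separately invoke the known reconstructibility of structured families --- trees, regular graphs, graphs of small diameter --- to clear out the most tractable sparse cases.

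The hard part will be the remaining connected graphs of intermediate density, and here lies the genuine obstruction. Kelly's lemma is fundamentally powerless at the top of the subgraph lattice: the number of \emph{spanning} subgraphs of $G$ of a given isomorphism type is in general \emph{not} determined by the deck, so there is no count-based bridge from ``all proper subgraph counts agree'' to ``the two graphs are isomorphic.'' Closing this gap requires an argument that manufactures global structural information out of purely local, bounded-size data. My proposal would be to attempt an inductive exchange argument: assuming two connected graphs $G$ and $G'$ realize the same deck, locate a vertex whose neighborhood is forced by the reconstructible invariants, delete it to obtain cards that must then coincide, and try to lift an isomorphism of the cards to one of $G$ and $G'$. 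Making such a lift succeed uniformly --- across all connected graphs rather than for the special families where the neighborhood is pinned down --- is precisely the step I expect to resist every direct attack, and it is the reason the statement has stood open since 1942.
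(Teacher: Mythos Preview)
The statement you are attempting to prove is the Reconstruction Conjecture itself, which the paper states as a \emph{conjecture}, not a theorem; the paper offers no proof of it and does not claim to. Your proposal, to its credit, is a faithful survey of the standard toolkit --- Kelly's lemma, reconstructibility of disconnected graphs, the Lov\'asz/M\"uller density thresholds, and the special families (trees, regular graphs) --- and you are entirely right that these settle large classes of graphs. But your own final paragraph identifies the gap precisely and then declines to fill it: the inductive exchange argument you sketch (``locate a vertex whose neighborhood is forced \ldots\ lift an isomorphism of the cards'') is not a proof strategy so much as a restatement of the problem. No one knows how to pin down a vertex neighborhood uniformly from the deck in the intermediate-density connected case, and your proposal does not supply a new idea for doing so.

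In short, there is no error to correct because there is no proof here --- just an accurate map of the known territory followed by an honest admission that the frontier is unbreached. That is appropriate for a conjecture that has been open since 1942, but it means the proposal cannot be accepted as a proof.
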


The two graphs with two vertices have the same deck.  Graphs in many families
are known to be reconstructible; these include disconnected graphs, trees,
regular graphs, and perfect graphs.  Surveys on graph reconstruction
include~\cite{Bondy91,BondyHemminger77,Lauri97,LS,Maccari02}.

Various parameters have been introduced to measure the difficulty of 
reconstructing a graph.  Harary and Plantholt~\cite{HP} defined the
{\it reconstruction number} of a graph to be the minimum number of cards from
its deck that suffice to determine it, meaning that no other graph has the same
multiset of cards in its deck (surveyed in~\cite{AFLM,Mthe}).  Kelly looked in
another direction, considering cards obtained by deleting more vertices.
He conjectured a more detailed version of the Reconstruction Conjecture.

\begin{conjecture}[Kelly~\cite{kel2}]
For $l\in\NN$, there is an integer $M_l$ such that any graph with at
least $M_l$ vertices is reconstructible from its deck of cards obtained by
deleting $l$ vertices.
\end{conjecture}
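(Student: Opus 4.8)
\medskip\noindent\textit{Proof strategy.}\ The statement is the $l$-reconstruction conjecture, which contains the Reconstruction Conjecture as the case $l=1$ and has been open since 1942; what I can offer is a program whose decisive step is exactly the one that remains unsolved. The natural starting point is the $(n-l)$-analogue of Kelly's counting lemma. For graphs $F$ and $G$ write $s(F,G)$ for the number of sets $S\esub V(G)$ with $G[S]\cong F$, and put $f=|V(F)|$ and $n=|V(G)|$. Counting pairs consisting of a card together with a copy of $F$ inside it, and noting that a fixed copy of $F$ survives in exactly $\binom{n-f}{l}$ of the cards obtained by deleting $l$ vertices, gives
\[
 s(F,G)=\binom{n-f}{l}^{-1}\sum_{C}s(F,C)
 \qquad(f\le n-l),
\]
the sum over all cards $C$ of the $(n-l)$-deck. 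Hence the $(n-l)$-deck determines the number of induced copies of every graph on at most $n-l$ vertices, and the conjecture reduces to showing that for $n$ large relative to $l$ these induced-subgraph frequencies determine $G$ up to isomorphism.

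I would then climb a ladder of global invariants extracted from these frequencies, proving the conjecture for structured classes before the general case. The degree list is the first rung --- recovered for $l\le 3$ and $n\ge7$ by the present paper --- and, as shown here, connectedness follows from it; counting spanning configurations and components along the lines of Kelly's classical argument should then reconstruct disconnected graphs outright, and analogous frequency identities are natural candidates for trees and regular graphs, where $l=1$ reconstructibility is already classical. Each such result is an instance of the target statement for a restricted family, and taken together they would carve the general problem down to graphs that are connected, irregular, and otherwise featureless.

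In parallel I would pursue the probabilistic route that has been so effective for $l=1$: show that for all but an exponentially small fraction of $n$-vertex graphs the frequency vector already pins $G$ down, giving the conjecture for almost every graph and isolating a sparse exceptional set. One should also keep the sharpness in view: deleting too large a fraction of the vertices destroys reconstructibility, since there are non-isomorphic graphs with identical $k$-decks when $k$ is small, so $M_l$ must genuinely grow with $l$, and any proof must exploit that $l$ is fixed while $n\to\infty$.

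The main obstacle is intrinsic and is the heart of the whole subject: converting complete knowledge of local subgraph frequencies into the global isomorphism type of $G$. The counting lemma recovers every invariant expressible through densities of subgraphs on at most $n-l$ vertices, but it offers no mechanism for assembling $G$ itself, and excluding the possibility that two non-isomorphic graphs agree on all these counts --- already the open question when $l=1$ --- is precisely where every known method stalls. I therefore expect that no complete proof is attainable with current techniques, and that the honest deliverable is the ladder of partial reconstructions above, of which this paper's degree-list and connectedness theorems are concrete instances.
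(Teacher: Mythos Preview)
The statement you were asked to prove is not a theorem in the paper but an open \emph{conjecture}: Kelly's generalized reconstruction conjecture. The paper states it as motivation and does not attempt a proof; indeed, for $l=1$ it specializes to the classical Reconstruction Conjecture, which has been open since 1942. So there is no ``paper's proof'' to compare against, and you are entirely right to recognize that a complete argument is out of reach.

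Your write-up is therefore not so much a proof with a gap as an honest survey of what is known and where the obstruction lies. The counting identity you state (Kelly's lemma for the $(n-l)$-deck) is correct and is exactly the tool underlying the paper's Lemma~\ref{deg} and Corollary~\ref{manvel}; your ladder of invariants (degree list, connectedness, disconnected graphs, trees, regular graphs) mirrors both the classical $l=1$ programme and the partial $l=3$ results this paper actually establishes. Your diagnosis of the main obstacle --- that local subgraph frequencies do not obviously determine global isomorphism type --- is the accepted view of the field. The only thing to flag is presentational: since the target is labelled a conjecture, the appropriate deliverable is exactly what you produced, a discussion of strategy and known partial results, rather than a ``proof'', and it would be cleaner to frame it that way from the first sentence.
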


\noindent
The original Reconstruction Conjecture is the claim $M_1=3$.

A {\it $k$-card} of a graph is an induced subgraph having $k$ vertices.
The \emph{$k$-deck} of $G$, denoted $\cD_k(G)$, is the multiset
of all $k$-cards.  When discussing reconstruction from the $k$-deck, we will
refer to $k$-cards simply as cards.

\begin{definition}
A graph $G$ is {\it $k$-deck reconstructible} if $\cD_k(H)=\cD_k(G)$ implies
$H\cong G$.  A graph $G$ (or a graph invariant) is {\it $l$-reconstructible}
if it is determined by $\cD_{|V(G)|-l}(G)$ (agreeing on all graphs having
that deck).  The {\it reconstructibility} of $G$, written $\rho(G)$, is the
maximum $l$ such that $G$ is $l$-reconstructible.
\end{definition}

For an $n$-vertex graph, ``$k$-deck reconstructible'' and
``$l$-reconstructible'' have the same meaning when $k+l=n$.
Kelly's conjecture is that for any $l\in\NN$, all sufficiently large graphs
are $l$-reconstructible.  Let $K'_{1,3}$ and $K''_{1,3}$ be the graphs
obtained from the claw $K_{1,3}$ by subdividing one or two edges, respectively.
The $5$-vertex graphs $C_4+K_1$ and $K'_{1,3}$ are not $2$-reconstructible,
since they have the same $3$-deck.  Having checked by computer that every graph
with at least six and at most nine vertices is $2$-reconstructible, McMullen
and Radziszowski~\cite{MR} asked whether $M_2=6$.  With computations up to nine
vertices, Rivshin and Radziszowski~\cite{RR} conjectured $M_l\le3l$.

Some results about reconstruction have been extended to the context of
reconstruction from the $k$-deck.  For example, almost every graph is
reconstructible from any set of three cards in the deck of cards obtained by
deleting one vertex (see~\cite{Bol,Chi,Mul}).  Spinoza and West~\cite{SW}
proved more generally that for $l=(1-o(1))\C{V(G)}/2$, almost all graphs are
$l$-reconstructible using only $\CH{l+2}2$ cards that omit $l$ vertices.  Among
other results, they also determined $\rho(G)$ exactly for every graph $G$ with
maximum degree at most $2$.

Since each induced subgraph with $k-1$ vertices arises exactly $n-k+1$ times by
deleting one vertex from a member of $\cD_k(G)$, we have the following.

\begin{observation}\label{k-1}
For any graph $G$, the $k$-deck $\cD_k(G)$ determines the $(k-1)$-deck
$\cD_{k-1}(G)$.
\end{observation}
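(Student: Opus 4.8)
The plan is to make precise the counting remark that precedes the statement. First I would observe that $\cD_k(G)$ already determines $n=\C{V(G)}$: the multiset $\cD_k(G)$ has exactly $\CH nk$ members, and $\CH nk$ is strictly increasing in $n$ for $n\ge k$, so $n$ is recoverable from the size of the deck, and hence so is the integer $n-k+1$.

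Next, from $\cD_k(G)$ I would form the multiset $\mathcal M$ whose members are all graphs of the form $C-v$, taken over every card $C$ in $\cD_k(G)$ (counted with its multiplicity) and every vertex $v$ of $C$; every member of $\mathcal M$ has $k-1$ vertices. The claim is that $\mathcal M$ equals $(n-k+1)\cdot\cD_{k-1}(G)$, meaning every isomorphism type occurs in $\mathcal M$ with multiplicity equal to $n-k+1$ times its multiplicity in $\cD_{k-1}(G)$. To verify this, I would work inside a fixed copy of $G$ and fix a $(k-1)$-element set $S\subseteq V(G)$: the card $G[T]$ contributes a copy of the induced subgraph $G[S]$ to $\mathcal M$ precisely when $S\subseteq T$ and one deletes the unique vertex of $T\setminus S$, and the number of $k$-element sets $T$ with $S\subseteq T\subseteq V(G)$ is $n-(k-1)=n-k+1$. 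Summing over all $(k-1)$-element subsets $S$ shows that $\mathcal M$ is obtained from $\cD_{k-1}(G)$ by replacing each member with $n-k+1$ copies of itself.

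Finally, dividing every multiplicity in $\mathcal M$ by the known positive integer $n-k+1$ returns $\cD_{k-1}(G)$, which proves the statement. There is no genuine obstacle here; the only point deserving a word of care is the bookkeeping between labeled vertex subsets and unlabeled cards, which is why the count above is carried out in terms of subsets of a fixed copy of $G$ and only then collapsed to isomorphism types.
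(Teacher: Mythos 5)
Your proposal is correct and is essentially the paper's own argument: the paper justifies the observation with the single remark that each induced subgraph on $k-1$ vertices arises exactly $n-k+1$ times by deleting one vertex from a member of $\cD_k(G)$, which is precisely the double count you carry out (your only addition is the routine check that $n$ itself is recoverable from $|\cD_k(G)|=\CH nk$). No issues.
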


By Observation~\ref{k-1}, information that is $k$-deck reconstructible is also
$j$-deck reconstructible when $j>k$.  This motivates the definition of
reconstructibility; if $G$ is $l$-recon\-struct\-ible, then $G$ is also
$(l-1)$-reconstructible, so we seek the largest such $l$.

Manvel~\cite{Manvel} proved for $n\ge6$ that the $(n-2)$-deck of an $n$-vertex
graph determines whether the graph satisfies the following properties:
connected, acyclic, unicyclic, regular, and bipartite.  For the first three of
these properties, sharpness of the threshold on $n$ is shown by the graphs
$C_4+P_1$ and $K'_{1,3}$ mentioned above.  Spinoza and West~\cite{SW} extended
Manvel's result by showing that connectedness is $3$-reconstructible when
$n\ge25$.  Using a somewhat different approach, we extend their result.

\begin{theorem}\label{conn}
For $n\ge7$, connectedness is $3$-reconstructible for $n$-vertex graphs,
and the threshold on $n$ is sharp.
\end{theorem}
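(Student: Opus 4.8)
The plan is to combine the $3$-reconstructibility of the degree list (valid for $n\ge7$, hence available here) with a structural analysis of the few ways an $n$-vertex graph can be disconnected while still having a connected card on $n-3$ vertices. Throughout, let $G$ and $H$ be $n$-vertex graphs with $\cD_{n-3}(G)=\cD_{n-3}(H)$ and $n\ge7$. By Observation~\ref{k-1} the common $(n-3)$-deck determines every $\cD_j$ with $j\le n-3$, and by the degree-list result it determines the degree list, hence $e(G)$. First I would dispose of the two extreme edge counts. If $e(G)\le n-2$, then $G$, and likewise $H$, is disconnected, since a connected $n$-vertex graph has at least $n-1$ edges. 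Dually, if $e(G)\ge\binom n2-(n-2)$, then $\overline G$ and $\overline H$ have at most $n-2$ edges each, hence are disconnected, so $G$ and $H$ are joins and both connected. Thus we may assume $n-1\le e(G)\le\binom n2-(n-1)$.

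Next I would use connected cards. Pruning three leaves of a spanning tree one at a time shows that a connected $n$-vertex graph has a connected induced subgraph on $n-3$ vertices; hence if no card in $\cD_{n-3}(G)$ is connected, then $G$ and $H$ are both disconnected and we are done. So assume some card is connected; being in the common deck, it is a connected card of both $G$ and $H$. A connected $(n-3)$-card of a disconnected graph lies inside a single component, which therefore has at least $n-3$ vertices, so the remaining components together span at most three vertices. Hence if $G$ is disconnected it has the form $G=G_1+G_2$ with $G_1$ connected and $1\le|V(G_2)|\le3$, and the same holds for $H$. If such a small part $G_2$ contains an isolated vertex of the whole graph, then the degree list contains a $0$, so both $G$ and $H$ are disconnected; this disposes of $|V(G_2)|=1$ and of $G_2\in\{2K_1,\,K_2+K_1\}$. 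The surviving cases, which I shall call the endgame, are $G\cong P'+K_2$ with $P'$ connected on $n-2$ vertices, and $G\cong P''+K_3$ or $G\cong P''+P_3$ with $P''$ connected on $n-3$ vertices; here the edge-count reduction already forces $P'$ (respectively $P''$) to be fairly dense, e.g.\ to contain a cycle. In the endgame I would assume $H$ is connected and derive a contradiction; the reverse implication (if $G$ is connected then so is $H$) then follows by symmetry.

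To carry out the endgame I would read from $\cD_{n-3}$ the number of cards isomorphic to well-chosen disconnected graphs and show this count cannot be realized by a connected $H$ of the prescribed order and degree list. For $G\cong P'+K_2$, each of the $\binom{n-2}{3}$ triples inside $P'$ gives a card with an isolated-edge component, and deleting exactly one endpoint of the $K_2$ gives a card with an isolated vertex; a connected $H$ has a spanning tree, and I would bound from above the number of its $(n-3)$-cards that can contain an isolated edge, respectively an isolated vertex, in terms of the (known) degree list, then check that the resulting inequalities fail for $n\ge7$. The cases $G\cong P''+K_3$ and $G\cong P''+P_3$ proceed the same way, counting cards with a triangle component or with a $P_3$-component together with two degree-one vertices. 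Making these counts sharp enough is the main obstacle: the degree list does not by itself separate, say, $P'+K_2$ from a path, so everything hinges on comparing card-type multiplicities, and it is precisely this comparison that makes $n\ge7$ necessary and may require checking a bounded list of small configurations by hand.

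For sharpness I would exhibit a connected and a disconnected $6$-vertex graph with the same $3$-deck. For any graph $F$, the multiset $\cD_3(F)$ is determined by $e(F)$, the number of triangles in $F$, and $\sum_{v}d_F(v)^2$, since the multiplicities of the four isomorphism types of $3$-vertex graph are fixed functions of these three quantities. These three invariants agree for the path $P_6$ and for $C_4+K_2$ (both have $5$ edges, no triangle, and squared-degree sum $18$), so $\cD_3(P_6)=\cD_3(C_4+K_2)$ while $P_6$ is connected and $C_4+K_2$ is not. Hence connectedness is not $3$-reconstructible for $6$-vertex graphs, and the threshold $n\ge7$ in the theorem is sharp.
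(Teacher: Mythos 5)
Your reduction is sound as far as it goes: the degree list is available from Theorem~\ref{degn3}, a connected card forces the disconnected graph to consist of one large component plus at most three further vertices, and a $0$ in the common degree list kills every case in which the small part contains an isolated vertex, leaving $P'+K_2$, $P''+K_3$, and $P''+P_3$. Your sharpness example $P_6$ versus $C_4+K_2$ is also correct (and different from the paper's $C_5+K_1$ versus $K''_{1,3}$). But the heart of the theorem is missing. The entire content of the result is the elimination of the case where the disconnected graph is a connected graph plus $K_2$ (your $P'+K_2$, the paper's $H=C+K_2$), and at exactly that point your argument becomes a plan --- ``I would bound from above the number of $(n-3)$-cards that can contain an isolated edge \dots then check that the resulting inequalities fail'' --- together with the admission that ``making these counts sharp enough is the main obstacle'' and that a ``bounded list of small configurations'' may need hand-checking. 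No such bound is stated or proved, and the desired inequality is not automatic: a sparse connected graph with long induced paths of $2$-vertices has many $(n-3)$-cards containing an isolated-edge component, so a crude count does not separate it from $P'+K_2$. The paper's treatment of precisely this case occupies most of its proof: it first shows the large component $C$ has no $1$-vertex (such a vertex would yield a card with $m-2$ edges, which a connected graph cannot produce by deleting three vertices), deduces that both graphs have exactly two $1$-vertices, introduces the maximal paths $P(u),P(v)$ of vertices of degree at most $2$ hanging from them, identifies the number $x\le4$ of $2$-vertices with the number of cards having $m-3$ edges, and then uses the structure of those cards to pin down $C$ and reach a contradiction for each value of $x$. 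Nothing playing this role appears in your writeup, so the proof is incomplete.

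A secondary point: you extract only one connected card from the common deck, which is why the three-vertex small parts $K_3$ and $P_3$ survive into your endgame. The paper observes that a spanning tree of a connected graph on at least seven vertices yields at least two connected $(n-3)$-cards, whereas a disconnected graph whose largest component has only $n-3$ vertices has exactly one connected $(n-3)$-card; this disposes of $P''+K_3$ and $P''+P_3$ at once and leaves only the genuinely hard case of a large component plus $K_2$. Using that observation would both shorten your case list and spare you the proposed (and likewise unexecuted) counts of cards with triangle or $P_3$ components.
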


The threshold is sharp because $C_5+P_1$ and $K''_{1,3}$ have the same $3$-deck.
For general $l$, the known upper and lower bounds on the threshold for $n$ to
guarantee that connectedness of $n$-vertex graphs is $l$-reconstructible are
quite far apart.  Spinoza and West~\cite{SW} proved that connectedness is
$l$-reconstructible when $n>2l^{(l+1)^2}$.  As a lower bound, we know only that
$n>2l$ is needed, since $C_{l+1}+P_{l-1}$ and $P_{2l}$ have the same
$l$-deck~\cite{SW}.  Indeed, $P_n$ is the only $n$-vertex graph whose
reconstructibility is known to be less than $n/2$.

One of the first easy results in ordinary reconstruction is that the degree
list of a graph with at least three vertices is $1$-reconstructible.
Manvel~\cite{Manvel} showed that the degree list is reconstructible from the
$k$-deck when the maximum degree is at most $k-2$.  With no restriction on the
maximum degree, Taylor showed that the degree list is reconstructible from the
$k$-deck when the number of vertices is not too much larger than $k$,
regardless of the value of the maximum degree.

\begin{theorem}[Taylor~\cite{Taylor}]
If $l\ge3$ and $n\ge g(l)$, then the degree list of any $n$-vertex graph is
determined by its $(n-l)$-deck, where
$$
g(l) = (l+\log{l}+1)
\left( \er + \frac{\er \log{l} + \er +1}{(l-1) \log{l}-1} \right) +1
$$
and $\er$ denotes the base of the natural logarithm.  Thus the degree list is
$l$-reconstructible when $n>\er l+O(\log l)$.
\end{theorem}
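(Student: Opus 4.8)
The plan is to extract from $\cD_{n-l}(G)$ all the ``first-order'' information about degrees, dispose of the case of small maximum degree by interpolation, and then show that the residual ambiguity in the degree sequence lives in a lattice of rank only $l$, which can be eliminated using integrality, nonnegativity, the Erd\H os--Gallai conditions, and the higher-order card data once $n$ is large in terms of $l$.

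Write $k=n-l$ and let $n_d$ be the number of vertices of $G$ of degree $d$. I would begin with Kelly's averaging principle: for any graph $F$ with $j\le k$ vertices, the number of copies of $F$ in $G$ (as a subgraph, or as an induced subgraph) equals $\binom{n-j}{k-j}^{-1}\sum_{C\in\cD_k(G)}(\text{number of such copies in }C)$, so every such count, and more generally the count of any small marked configuration, is reconstructible. Counting stars $K_{1,t}$ for $0\le t\le k-1$ shows that $N_t:=\sum_d n_d\binom dt$ is reconstructible for $0\le t\le k-1$; equivalently, writing $\nu(x):=\sum_d n_d x^d$, we know $\nu(x)$ modulo $(x-1)^k$. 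In passing we also learn $n$, $e(G)$, the distribution of edge counts over the cards, counts of double stars and short paths, and counts of configurations spanning two vertices with their common neighbours --- data that is not determined by the degree sequence and that will help pin down boundary cases.

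If $\Delta(G)\le k-1$, all degrees lie in $\{0,\dots,k-1\}$, so $N_0,\dots,N_{k-1}$ are all the binomial moments of the degree multiset; since $\binom x0,\dots,\binom x{k-1}$ is a basis of the polynomials of degree $<k$, the multiset $(n_d)$ is recovered by interpolation (the matrix $[\binom dt]_{0\le t,d\le k-1}$ is triangular with unit diagonal). The same applies to $\overline G$, whose $k$-deck is obtained from that of $G$ by complementing cards and whose degree sequence determines that of $G$. Hence we may assume both $\Delta(G)\ge k$ and $\Delta(\overline G)\ge k$, i.e.\ $G$ has a vertex of degree at least $n-l$ and one of degree at most $l-1$, and it remains to handle this case.

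Here is the obstacle. If $H$ is any $n$-vertex graph with $\cD_k(H)=\cD_k(G)$, then $\nu_H\equiv\nu_G\pmod{(x-1)^k}$ with $\deg\nu_H,\deg\nu_G\le n-1$, so $\nu_H-\nu_G=(x-1)^k q(x)$ for an integer polynomial $q$ with $\deg q\le l-1$: the first-order data fixes the degree sequence only up to the rank-$l$ lattice of coefficient vectors of such $(x-1)^kq$, and this is precisely why the threshold must exceed the trivial value $2l$ and why no single averaging identity suffices. To finish one must show that $q=0$. Every coefficient of $\nu_H$ and of $\nu_G$ lies in $\{0,1,\dots,n\}$, so $(x-1)^kq$ has all coefficients in $[-n,n]$; since $(x-1)^k$ has coefficients as large as $\binom k{\lfloor k/2\rfloor}$, a crude estimate (for instance via Parseval) already forces $q=0$ once $n$ is of order $l\log l$, but to push the threshold down to $n>\er l+O(\log l)$ one must also invoke that $\nu_H$ and $\nu_G$ are genuinely graphical and use the second-order invariants recorded above, which restrict which lattice vectors can separate two graphical sequences. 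Quantifying and optimizing the size requirements of all these steps produces the explicit bound $g(l)$; making that optimization tight, rather than controlling any individual configuration count, is where the main work lies.
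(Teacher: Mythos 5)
First, some context: the paper does not prove this statement --- it is quoted from Taylor~\cite{Taylor} as background --- so there is no internal proof to compare you against; I will judge the proposal on its own terms. Your reduction is correct and entirely standard: Kelly's lemma recovers the star counts, hence the binomial moments $N_t=\sum_d n_d\binom{d}{t}$ for $t\le k-1$; interpolation recovers the degree list when $\Delta(G)\le k-1$ (this is exactly Manvel's observation, Corollary~\ref{manvel} in the paper); complementation handles $\Delta(\overline G)\le k-1$; and the residual ambiguity is precisely the rank-$l$ lattice $\nu_H-\nu_G=(x-1)^k q(x)$ with $q\in\ZZ[x]$, $\deg q\le l-1$, subject to $\|\nu_H-\nu_G\|_1\le 2n$. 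All of that is fine, but it is the easy part.

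The gap is that everything after this point --- which is the entire content of Taylor's theorem --- is asserted rather than proved. You must show that no nonzero $q$ survives once $n\ge g(l)$, and neither of your suggestions holds up as written. The parenthetical ``via Parseval'' does not work: expanding $\frac1{2\pi}\int(2\sin(\theta/2))^{2k}\,|q(e^{i\theta})|^2\,d\theta$ against the Fourier coefficients $(-1)^j\binom{2k}{k+j}$ of the weight, the off-center terms for $1\le|j|\le l-1$ are of the same order as the main term $\binom{2k}{k}\|q\|_2^2$ already when $l=2$, so no useful lower bound emerges; and the soft route via Jensen's formula gives only $\|(x-1)^kq\|_2\ge1$, which is vacuous. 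A real argument must either (a) run the cascading analysis of the moment equations --- the largest $h$ with $c_h\ne0$ forces $|c_{h-1}|,|c_{h-2}|,\dots$ to grow multiplicatively until some $|c_i|$ exceeds $n$ within $l$ steps, which is what Taylor actually does and what the paper's proof of Theorem~\ref{degn3} carries out by hand for $l=3$ (e.g.\ Case~1: $c_{n-3}=-1$ forces $c_{n-4}=n-3$ forces $c_{n-5}=-(n-3)(n-4)/2$, already more than $n$ vertices when $n\ge8$) --- or (b) evaluate $(x-1)^kq$ at roots of unity and use resultants with cyclotomic polynomials to force $q$ to have more roots than its degree allows. The constant $\er$ in $g(l)$ comes out of optimizing the product growth in (a); it does not fall out of anything you have written, your claimed $O(l\log l)$ threshold is unsubstantiated, and your closing sentence explicitly defers ``the main work.'' As it stands this is a correct framing of the problem with the proof missing.
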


\noindent
For small $l$, one can obtain exact thresholds.  Chernyak~\cite{Che} proved
that the degree list is $2$-reconstructible when $n\ge6$; again the example
of $C_5+P_1$ and $K'_{1,3}$ shows that this is sharp.  We extend this
to $3$-reconstructibility.

\begin{theorem}\label{degn3}
For $n\geq7$, any two graphs of order $n$ that have the same $(n-3)$-deck have
the same degree list, and this threshold on $n$ is sharp.
\end{theorem}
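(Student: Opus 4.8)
The plan is to cut the problem down, by elementary subgraph‑counting, to an ambiguity in only three entries of the degree list, and then to resolve those three numbers by a structural analysis of which cards, with what multiplicities, appear in the $(n-3)$‑deck.

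\emph{Step 1 (subgraph counts).} Iterating Observation~\ref{k-1}, the $(n-3)$-deck of $G$ determines the $j$-deck for every $j\le n-3$, hence determines the number of copies, induced or not, of every graph on at most $n-3$ vertices. Applying this to the stars $K_{1,j}$ with $j\le n-4$ shows that $\cD_{n-3}(G)$ determines the quantities
$$\mu_j:=\sum_{v\in V(G)}\binom{\deg_G v}{j}=\sum_i n_i\binom ij\qquad(0\le j\le n-4),$$
where $n_i$ denotes the number of vertices of $G$ of degree $i$.

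\emph{Step 2 (reduction to three unknowns).} Viewed as linear constraints on $(n_0,\dots,n_{n-1})$, the equations of Step~1 have a solution set of affine dimension $3$: from $\sum_i n_i(1+t)^i=\sum_j\mu_j t^j$ and the substitution $s=1+t$ one sees that, given $\mu_0,\dots,\mu_{n-4}$, the entries $n_{n-4},n_{n-5},\dots,n_0$ are recovered in turn from $\mu_{n-4},\mu_{n-5},\dots,\mu_0$ once $(n_{n-1},n_{n-2},n_{n-3})$ is known, while the space of homogeneous solutions is spanned by three vectors determined by their top three (equivalently, bottom three) entries. Hence two $n$-vertex graphs with the same $(n-3)$-deck have the same degree list if and only if they agree in the number of vertices of degree $n-1$, of degree $n-2$, and of degree $n-3$; equivalently, since $\overline G$ and $\overline H$ also have equal $(n-3)$-decks, if and only if they have equally many isolated vertices, leaves, and vertices of degree $2$.

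\emph{Step 3 (the easy extremes).} Every $(n-3)$-card has maximum degree at most $n-4$, and a vertex of degree $d$ attains degree $\min\{d,n-4\}$ in a suitable card, obtained by deleting as many of its non-neighbors as possible (when $d\ge n-4$ there are at most three to absorb). Thus the deck determines $\min\{\Delta(G),n-4\}$: if this value is less than $n-4$, then $\Delta(G)\le n-5$, so $n_{n-1}=n_{n-2}=n_{n-3}=0$ and the degree list is determined. Applying the same remark to $\overline G$ settles the case $\delta(G)\ge4$. So we may assume $\Delta(G)\ge n-4$ and $\delta(G)\le3$.

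\emph{Step 4 (the main case --- the obstacle).} It remains, when $G$ has both a vertex of degree $\ge n-4$ and a vertex of degree $\le3$, to recover $n_0,n_1,n_2$. The leverage is that a vertex of degree at most $3$ forces all of its (at most three) neighbors into the deleted set of any card in which it is isolated, and dually for a vertex of degree $\ge n-4$; consequently the cards that exhibit such a vertex, and the ways that low‑degree vertices (respectively near‑universal vertices) can coexist in a single card, are severely constrained. The plan is to read off $n_0$ from $\max_C(\text{number of isolated vertices of }C)$ together with the number of cards attaining that value and the next ones down --- correcting for the ``overshoot'' that occurs when a well‑chosen $3$-set isolates extra vertices, which happens only when $G$ has several low‑degree vertices clustered together --- and then to recover $n_0+n_1$ and $n_0+n_1+n_2$ by similar but finer counts. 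Throughout one must exploit the multiplicities in the deck, not merely which isomorphism types of card occur, and argue by cases on $\delta(G)\in\{0,1,2,3\}$. This case analysis is the technical heart of the proof, and it is exactly here that the hypothesis $n\ge7$ (cards on at least four vertices) is needed: it rules out the degeneracy at $n=6$, where $C_5+P_1$ and $K''_{1,3}$ have the same $3$-deck while their degree‑list data $(n_0,n_1,n_2)$ are $(1,0,5)$ and $(0,3,2)$.
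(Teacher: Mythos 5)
Your Steps 1--3 are correct but are essentially the paper's own setup: Step 1 is the Kelly-style subgraph count of Lemma~\ref{deg}, and Step 2 is Manvel's observation (Corollary~\ref{manvel}) that the deck determines the degree list once the numbers of vertices of degree $n-1$, $n-2$, $n-3$ are known; Step 3's reduction to the regime $\Delta(G)\ge n-4$ and $\delta(G)\le 3$ is also fine. The problem is Step 4, which is where the entire content of the theorem lives: you have written a plan, not a proof. No formula for $n_0$ in terms of deck data is derived, the ``overshoot correction'' is never specified, and the case analysis on $\delta(G)\in\{0,1,2,3\}$ that you yourself identify as the technical heart is simply absent. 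As written, the argument does not establish the theorem.

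Moreover, the plan faces a concrete obstacle in exactly the regime it must handle. When $G$ has a vertex of degree at least $n-4$, the statistic $\max_C(\hbox{number of isolated vertices of }C)$ is not $n_0$ plus a bounded, locally computable correction: for $G=K_{1,n-1}$ one card has $n-3$ isolated vertices while $n_0=0$, and in general deleting a single high-degree vertex can isolate arbitrarily many leaves simultaneously. So reading off $(n_0,n_1,n_2)$ from isolated-vertex counts requires first pinning down the high-degree structure you postponed, and it is not clear your ``finer counts'' close the loop. The paper instead never leaves the linear relations of your Step 1: setting $c_i$ to be the difference of the two degree distributions, it combines the two instances \eqref{j4} and \eqref{j5} of \eqref{phidiff} (the cases $j=n-4$ and $j=n-5$) with the normalization $m\le\frac12\binom n2$ obtained by complementation and the elementary bound that $t$ vertices of total degree $s$ meet at least $s-\binom t2$ edges, then grinds through a case analysis on the largest $h\ge n-3$ with $c_h\ne0$, using realizability of degree lists and specific forced cards (such as $K_4$) to kill the residual small cases at $n=7,8,9,10$. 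To complete your route you would need a case analysis of comparable length and delicacy; Steps 1--3, while correct, are the easy part.
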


Again the example of $C_5+P_1$ and $K''_{1,3}$ proves sharpness.
We use Theorem~\ref{degn3} as a tool in the proof of Theorem~\ref{conn}.  With
Chernyak's result being somewhat inaccessible, we also obtain it and Manvel's
result on $2$-reconstructibility of connectedness as corollaries of our
results.

\section{$3$-reconstructibility of degree lists}

We begin with a basic counting tool used also by Manvel~\cite{Manvel} and by
Taylor~\cite{Taylor}.  In a graph $G$, we refer to a vertex of degree $j$ as a
{\it $j$-vertex}.  

\begin{lemma}\label{deg}
Let $\phi(j)$ denote the total number of $j$-vertices over
all cards in the $k$-deck $\cD_k$ of an $n$-vertex graph $G$.  Letting $a_i$
denote the number of $i$-vertices in $G$ (and $l=n-k$),
\begin{equation}\label{l1}
\phi(j)=\SE ij{j+l} a_i\CH ij\CH{n-1-i}{k-1-j} .
\end{equation}
\end{lemma}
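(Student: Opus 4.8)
The plan is to count, in two different ways, the number of pairs $(v,C)$ where $C$ is a card in the $k$-deck $\cD_k$ and $v$ is a $j$-vertex of $C$. Fix a graph $G$ with $a_i$ vertices of degree $i$. A $k$-card $C$ is determined by choosing which $l=n-k$ vertices of $G$ to delete, and a vertex $v$ surviving in $C$ has its degree in $C$ equal to its degree in $G$ minus the number of deleted vertices that were neighbors of $v$ in $G$. So if $v$ is an $i$-vertex in $G$, then $v$ becomes a $j$-vertex in $C$ exactly when the deleted set consists of $j$-many... more precisely, exactly $i-j$ of the $l$ deleted vertices are neighbors of $v$, and the remaining $l-(i-j)$ deleted vertices are non-neighbors of $v$ other than $v$ itself.

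First I would set up the count by summing over the degree $i$ of $v$ in $G$. For a fixed $i$-vertex $v$, the number of $k$-cards in which $v$ appears as a $j$-vertex is the number of ways to pick the deleted $l$-set: choose $i-j$ neighbors of $v$ (there are $\CH{i}{i-j}=\CH{i}{j}$ ways) and choose $l-(i-j)$ vertices from the $n-1-i$ non-neighbors of $v$ (there are $\CH{n-1-i}{l-i+j}$ ways). Rewriting $l-i+j$ using $l=n-k$ gives $n-k-i+j = (n-1-i)-(k-1-j)$, so $\CH{n-1-i}{l-i+j}=\CH{n-1-i}{k-1-j}$. Multiplying by $a_i$ (the number of such vertices $v$) and summing over $i$ gives the right-hand side of \eqref{l1}. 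The summation range is dictated by feasibility: we need $i-j\ge 0$, i.e. $i\ge j$, and we need $i-j\le l$, i.e. $i\le j+l$; outside this range one of the binomial coefficients vanishes anyway, so extending or restricting the range is harmless, but stating it as $\SE{i}{j}{j+l}$ is the natural choice.

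The only subtle point, and the thing I'd be most careful about, is the bookkeeping of which binomial coefficients are allowed to be zero versus which must be handled by restricting the index range — in particular making sure $\CH{n-1-i}{k-1-j}$ is interpreted as $0$ when $k-1-j<0$ or when $k-1-j>n-1-i$, and confirming that these vanishing cases line up exactly with infeasible deletions. I would also note explicitly that distinct $k$-cards arising from distinct deleted $l$-sets are counted separately in $\cD_k$ (the deck is a multiset), so no isomorphism collapsing occurs in the count; $\phi(j)$ genuinely counts a sum over labeled deletions. With that in hand the identity \eqref{l1} follows immediately, completing the proof. \qed
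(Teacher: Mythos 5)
Your proof is correct and is essentially the paper's argument: both count, for each vertex of degree $i$ in $G$, the number of cards in which it survives with degree exactly $j$, obtaining $\binom{i}{j}\binom{n-1-i}{k-1-j}$ per vertex (you phrase it via choosing the deleted neighbors and non-neighbors, the paper via the retained ones, which is the same count). Your handling of the summation range and the vanishing binomial coefficients matches the paper's remark that the contribution is $0$ when $k-1-j>n-1-i$, forcing $i\le l+j$.
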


\begin{proof}
In each card, each vertex counted by $\phi(j)$ has degree at least $j$ in $G$.
When that degree is $i$, the vertex in the reconstructed graph contributes
exactly $\CH ij\CH{n-1-i}{k-1-j}$ to the computation of $\phi(j)$.  This
contribution is $0$ when $k-1-j> n-1-i$; the vertex then does not have enough
nonneighbors in the full graph to occur with degree exactly $j$ in a card.
Thus we require $i\le n-k+j=l+j$.
\end{proof}

\begin{corollary}[Manvel~\cite{Manvel}]\label{manvel}
From the $k$-deck of a graph and the numbers of vertices with degree $i$
for all $i$ at least $k$, the degree list of the graph is determined.
\end{corollary}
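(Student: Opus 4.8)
The plan is to run the linear system in~\eqref{l1} ``from the top down.'' Fix the $k$-deck $\cD_k$ with $k=n-l$; then every quantity $\phi(j)$ is known. Regard~\eqref{l1} as a linear system in the unknowns $a_0,\dots,a_{n-1}$. The key structural observation is that the coefficient of $a_i$ in $\phi(j)$ is $\CH ij\CH{n-1-i}{k-1-j}$, and for the largest relevant index $i=j+l$ this coefficient is $\CH{j+l}{j}\CH{n-1-j-l}{k-1-j}=\CH{j+l}{j}\CH{k-1}{k-1-j}$, which is positive; moreover no $a_i$ with $i>j+l$ contributes to $\phi(j)$. So if we already know $a_i$ for all $i>m$, then from $\phi(m-l)$ we can solve for $a_m$, since $\phi(m-l)=\P{\text{known terms from }a_i,\,i>m} + a_m\CH{m}{m-l}\CH{k-1}{j}$ after moving the lower-index terms $a_i$ with $m-l\le i<m$ to the unknown side---wait, those are \emph{not} yet known. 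Let me instead organize it the other way.

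More carefully: the plan is downward induction on the degree value, starting from the hypothesis that $a_i$ is known for every $i\ge k$. I would show that $\phi(j)$ for $j$ running downward from $k-1$ to $0$ successively determines $a_{j+l}$, hence eventually all of $a_0,\dots,a_{k-1}$, completing the list since the $a_i$ with $i\ge k$ were given. For the inductive step, suppose $a_i$ is known for all $i>j+l$ (this includes all $i\ge k$ when $j+l<k$, i.e.\ $j<k-l=j$\ldots). First I would peel off from $\phi(j)$ all contributions of $a_i$ with $i>j$ and $i\le j+l$ that are \emph{already known}; the remaining equation involves only $a_j,a_{j+1},\dots,a_{j+l}$ with $a_j,\dots,a_{j+l-1}$ not yet determined. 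That is still $l+1$ unknowns in one equation, so a single $\phi(j)$ does not suffice---the right framing is that the \emph{whole} family $\{\phi(j):0\le j\le k-1\}$ is a triangular system in the \emph{top} variables.

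The clean way: order the unknowns as $a_{k-1},a_{k-2},\dots,a_0$ and the equations as $\phi(k-1),\phi(k-2),\dots,\phi(0)$. Equation $\phi(j)$ involves $a_i$ only for $i\le j+l$; since $i\le k-1$ always (the $i\ge k$ terms are known and subtracted off), the \emph{largest} unknown index appearing in $\phi(j)$ is $\min\{j+l,k-1\}$. Thus $\phi(k-1)$ alone involves every unknown, so this is not lower-triangular in the naive sense. Hence the actual mechanism must be the one in Corollary~\ref{manvel} combined with an auxiliary count: Corollary~\ref{manvel} (Manvel) already asserts that once the numbers of $i$-vertices for all $i\ge k$ are known, the $k$-deck determines the rest of the degree list. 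So the statement to prove is precisely Corollary~\ref{manvel}, and the proof is: subtract the known high-degree contributions from each $\phi(j)$, then solve the resulting system. I would prove the resulting $k\times k$ system (unknowns $a_0,\dots,a_{k-1}$, equations $\phi(0),\dots,\phi(k-1)$) is nonsingular by exhibiting it as triangular after reindexing: substitute $i=j+t$ so that $\phi(j)=\sum_{t=0}^{l}a_{j+t}\CH{j+t}{j}\CH{n-1-j-t}{k-1-j}$, and note the coefficient matrix $M$ with $M_{j,i}=\CH ij\CH{n-1-i}{k-1-j}$ is lower-triangular with respect to the ordering by $i-j$\ldots

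The honest summary of the proof I would write: Fix the $k$-deck, so all $\phi(j)$ are known. By Lemma~\ref{deg}, $\phi(j)=\sum_{i\ge j}a_i\CH ij\CH{n-1-i}{k-1-j}$ with the convention that $\CH{n-1-i}{k-1-j}=0$ once $i>j+l$. Using the given values $a_i$ for $i\ge k$, define $\psi(j)=\phi(j)-\sum_{i\ge k}a_i\CH ij\CH{n-1-i}{k-1-j}$, which is computable. Then $\psi(j)=\sum_{i=j}^{k-1}a_i\CH ij\CH{n-1-i}{k-1-j}$. This is a linear system $\psi=Ma$ where $a=(a_0,\dots,a_{k-1})^{\mathsf T}$ and $M$ is the $k\times k$ matrix $M_{ji}=\CH ij\CH{n-1-i}{k-1-j}$ (indices $0\le j,i\le k-1$). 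The matrix $M$ is upper-triangular in these indices since $M_{ji}=0$ when $i<j$ (because $\CH ij=0$), and its diagonal entries $M_{jj}=\CH{n-1-j}{k-1-j}$ are all positive for $0\le j\le k-1$ (since $n-1-j\ge k-1-j\ge 0$). Hence $M$ is invertible over $\mathbb{Q}$, so $a$ is uniquely determined by $\psi$, and therefore the degree list $(a_0,\dots,a_{n-1})$ is determined by the $k$-deck together with the numbers $a_k,a_{k+1},\dots$. The main (and only nontrivial) obstacle is just to double-check the triangularity and the nonvanishing of the diagonal, i.e.\ that $M_{jj}=\CH{n-1-j}{k-1-j}\neq 0$ for every $j\in\{0,\dots,k-1\}$ and that all entries below the diagonal vanish; everything else is bookkeeping with the vanishing conventions for binomial coefficients inherited from Lemma~\ref{deg}.
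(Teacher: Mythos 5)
Your final summary is a correct proof, and it rests on the same underlying fact as the paper's: the linear relation \eqref{l1} between the $\phi(j)$'s and the $a_i$'s is triangular because $\CH ij=0$ for $i<j$. The packaging differs. The paper proves the corollary by downward induction on the deck size: it invokes Observation~\ref{k-1} to pass from the $k$-deck to the $(k-1)$-deck and at each stage reads off the single new unknown $a_{k'-1}$ from the single equation $\phi(k'-1)$, where its coefficient is $\CH{k'-1}{k'-1}\CH{n-k'}{0}=1$, so no invertibility check is needed. You instead stay inside the $k$-deck, use all $k$ counts $\phi(0),\dots,\phi(k-1)$, subtract the known contributions of degrees $\ge k$, and invert the upper-triangular matrix $M_{ji}=\CH ij\CH{n-1-i}{k-1-j}$, for which you must (and do) verify that every diagonal entry $\CH{n-1-j}{k-1-j}$ is nonzero, which holds since $n\ge k$. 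Both are back-substitution on the same triangular system; yours avoids Observation~\ref{k-1}, the paper's avoids the determinant check. One remark on the exposition: the worry in your middle paragraphs --- that $\phi(j)$ contains the $l+1$ not-yet-known unknowns $a_j,\dots,a_{j+l}$ --- arises only because you were trying to solve for the \emph{top} index $a_{j+l}$; your final version correctly solves for the \emph{bottom} index $a_j$ in order $j=k-1,k-2,\dots,0$, at which point each equation contains exactly one new unknown, and the false starts should be deleted.
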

\begin{proof}
Since the $k$-deck determines the $(k-1)$-deck, using induction it suffices to
show that knowing both $\cD_k(G)$ and $a_i$ for $i\ge k$ determines $a_{k-1}$.
Simply solve for $a_{k-1}$ in the expression~\eqref{l1} for $\phi(k-1)$
obtained by setting $j=k-1$.
\end{proof}

With these tools, we prove Theorem \ref{degn3}, which we restate.
\begin{theorem*}[{\bf 1.7}]
For $n\geq7$, any two graphs of order $n$ that have the same $(n-3)$-deck have
the same degree list, and this threshold on $n$ is sharp.
\end{theorem*}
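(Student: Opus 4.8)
The plan is to work with the quantities $\phi(j)$ from Lemma~\ref{deg} applied with $l=3$, so that the $(n-3)$-deck determines $\phi(j)$ for every $j$, and each $\phi(j)$ is an explicit linear combination of $a_j,a_{j+1},a_{j+2},a_{j+3}$. Suppose $G$ and $H$ have the same $(n-3)$-deck but different degree lists. Let $a_i$ and $a_i'$ denote the numbers of $i$-vertices in $G$ and $H$ respectively, and set $d_i=a_i-a_i'$. Then $\sum_i d_i=0$ (both graphs have $n$ vertices), $\sum_i i\,d_i=0$ (the number of edges is $(n-2)$-reconstructible, hence $(n-3)$-reconstructible via Observation~\ref{k-1}; alternatively it follows from $\phi$), and for every $j$ the relation $\phi_G(j)=\phi_H(j)$ gives
\begin{equation}\label{drel}
d_j\CH{n-1-j}{2}+d_{j+1}(j+1)\CH{n-2-j}{1}+d_{j+2}\CH{j+2}{2}=0 .
\end{equation}
Here I have specialized \eqref{l1} with $l=3$, $k=n-3$, so the binomial coefficients simplify; the $i=j+3$ term carries $\CH{n-1-i}{k-1-j}=\CH{n-4-j}{-1}=0$, so only three consecutive $d$'s appear in each equation. (One should double-check the $i=j+3$ term really vanishes in the regime we use; if $n-3$ forces it to be $\CH{n-4-j}{2-j-\dots}$ one simply keeps a four-term recurrence, which does not change the strategy.)

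The core of the argument is then purely linear-algebraic: show that the only integer sequence $(d_i)_{i=0}^{n-1}$ satisfying \eqref{drel} for all $j$, together with $\sum d_i=0$ and $\sum i\,d_i=0$, and additionally the constraint that the $a_i$ and $a_i'$ are genuine degree sequences of $n$-vertex graphs (in particular $0\le a_i,a_i'\le n$ and $\sum a_i\deg$-parity/handshake, plus the less obvious but crucial point that high-degree vertices are directly visible: by Corollary~\ref{manvel}, $a_i=a_i'$ for all $i\ge n-3$, so $d_i=0$ for $i\ge n-3$), forces $d_i=0$ for all $i$. The recurrence \eqref{drel} lets one propagate: knowing $d_{j+1}=d_{j+2}=0$ for large $j$ and pushing downward, each step expresses $d_j$ as a rational multiple of $d_{j+1},d_{j+2}$, so naively $d\equiv 0$ — but the coefficient of $d_j$ is $\CH{n-1-j}{2}$, which vanishes when $j=n-1$ or $j=n-2$, and for small $j$ the propagation can fail to be invertible in the other direction. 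So instead I would run the recurrence from the top: $d_{n-1}=d_{n-2}=d_{n-3}=0$ is given, and then \eqref{drel} with $j=n-5,n-6,\dots$ has nonzero leading coefficient $\CH{n-1-j}{2}$ and recovers $d_{n-4}=0$, then $d_{n-5}=0$, and so on all the way down to $d_0=0$. The constraint $n\ge7$ is exactly what is needed to make every leading coefficient $\CH{n-1-j}{2}$ nonzero over the relevant range of $j$ and to have enough ``known'' top values to start the recursion; for $n=6$ the recursion stalls, matching the sharpness example $C_5+P_1$ versus $K'_{1,3}$ at $n=5$ and the new example $C_5+P_1$ versus $K''_{1,3}$ — wait, the stated sharpness example is $C_5+P_1$ and $K''_{1,3}$, both on $6$ vertices, which have the same $3$-deck but different degree lists, and this is precisely where the $n\ge7$ hypothesis is tight.

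I expect the main obstacle to be the low-degree end of the recurrence, i.e.\ controlling $d_0,d_1,d_2$ and possibly $d_3$: the clean top-down propagation needs the leading binomial $\CH{n-1-j}{2}$ to be nonzero, which is fine for all $j\le n-3$ when $n\ge7$, but one must make sure the three ``seed'' zeros $d_{n-1},d_{n-2},d_{n-3}$ together with the two global moment conditions $\sum d_i=0$, $\sum i\,d_i=0$ are genuinely available and not circular — the seed zeros come from Corollary~\ref{manvel} (degrees $\ge k=n-3$ are read off directly from the deck), and that is legitimate. A secondary subtlety is verifying the exact form of \eqref{drel}: one must carefully substitute $k=n-3$, $l=3$ into \eqref{l1}, confirm the index range $j\le i\le j+3$, and check which terms survive at the boundary values of $j$ (for instance $\phi(0)$ involves $a_0,a_1,a_2,a_3$ and the coefficients must be written out). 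Once \eqref{drel} is pinned down precisely, the remainder is a short finite induction on $j$ decreasing from $n-3$ to $0$, using only that $\CH{n-1-j}{2}\ne0$ in that range, which holds exactly because $n\ge7$; I would present that induction as the proof's one real computation and relegate the binomial bookkeeping to a sentence.
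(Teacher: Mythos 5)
There is a fatal gap at the very center of your argument: the ``seed zeros'' $d_{n-1}=d_{n-2}=d_{n-3}=0$ are not available, and they are in fact the entire content of the theorem. Corollary~\ref{manvel} is a conditional statement in the opposite direction from how you use it: it says that \emph{if} you are additionally told the numbers of vertices of degree at least $k$, \emph{then} the $k$-deck determines the rest of the degree list. It does not say that the $k$-deck determines those high-degree counts. Indeed it cannot: every vertex of degree $n-4,n-3,n-2,$ or $n-1$ appears simply as a dominating vertex (degree $n-4$) on some cards of the $(n-3)$-deck, so the deck cannot directly separate them, and the sharpness example at $n=6$ ($C_5+K_1$ with degree list $(2,2,2,2,2,0)$ versus $K''_{1,3}$ with $(3,2,2,1,1,1)$) shows the counts of vertices of degree $\ge k$ genuinely differ between graphs with the same $k$-deck. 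The paper uses Corollary~\ref{manvel} to conclude exactly the opposite of what you claim: if the degree lists differ, then the \emph{first} discrepancy $c_h\ne0$ must occur at some $h\ge n-3$. Without your seeds, the linear system you set up (the relations for $j=0,\dots,n-4$ plus the two moment conditions) is underdetermined and has nontrivial rational solutions, so no purely linear-algebraic propagation can close the argument. The actual proof must bring in the combinatorial constraints you mention only in passing --- $|c_i|\le n$, integrality, the edge bound $m\le\frac12\binom n2$ via complementation, realizability of degree sequences, and in a few stubborn subcases structural arguments about which graphs can produce particular cards --- and these occupy a multi-page case analysis on the value of $h\in\{n-3,n-2,n-1\}$.

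A secondary but nontrivial error: your three-term relation \eqref{drel} is wrong. With $k=n-3$ and $i=j+3$, the coefficient is $\binom{n-1-i}{k-1-j}=\binom{n-4-j}{n-4-j}=1$ (not $\binom{n-4-j}{-1}$), so the term $c_{j+3}\binom{j+3}{3}$ survives and the relation is the four-term identity \eqref{phidiff} of the paper. You hedge that a four-term recurrence ``does not change the strategy,'' but it does make the system even more underdetermined, and in any case the strategy already fails for the reason above.
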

\begin{proof}
For sharpness, the $3$-decks of both $C_5+K_1$ and $K''_{1,3}$ consist of five
copies of $P_3$, ten copies of $P_2+P_1$, and five copies of $3P_1$.

Given $n\ge7$, let $\cD$ be the $(n-3)$-deck of an $n$-vertex graph.
We show that all reconstructions from $\cD$ have the same degree list.

Let $G$ and $H$ be reconstructions from $\cD$.  Since $\cD$ determines
the $2$-deck, we know the common number of edges in $G$ and $H$; let it be
$m$.  We may assume $m\le\FR12\CH n2$, since otherwise we can analyze
the complements of $G$ and $H$. 

We will use repeatedly the fact that any $t$ vertices whose degrees sum to at
least $s$ are together incident to at least $s-\CH t2$ edges.

\nobreak
Let $a_i$ and $b_i$ be the numbers of $i$-vertices in $G$ and
$H$, respectively, and let $c_i=a_i-b_i$.  The computation in~\eqref{l1} is
valid using either $G$ or $H$, producing the same value $\phi(j)$ from $\cD$.
Hence the difference of the two instances of~\eqref{l1} yields
\begin{equation}\label{phidiff}
0=\SE ij{j+3} c_i\CH ij\CH{n-1-i}{n-4-j},
\end{equation}
since here $k=n-3$.  We will be interested in particular in the cases
$j=n-4$ (dominating vertices on cards) and $j=n-5$, which we write explicitly
as

\begin{equation}\label{j4}
c_{n-4}+(n-3)c_{n-3}+\CH{n-2}{2}c_{n-2}+\CH{n-1}{3}c_{n-1}=0
\end{equation}
and
\begin{equation}\label{j5}
4c_{n-5}+3(n-4)c_{n-4}+2\binom{n-3}{2}c_{n-3}+\binom{n-2}{3}c_{n-2}=0.
\end{equation}

The observation of Manvel (Corollary~\ref{manvel}) implies that if $G$ and $H$
have different degree lists, then $c_i\ne 0$ for some $i$ with $i\ge n-3$.  Let
$h$ be the largest such index.  By symmetry, we may assume $c_h<0$.  We consider
cases depending on the value of $h$. 

\bigskip
{\bf Case 1:} $h=n-3$.
In this case $c_{n-1}=c_{n-2}=0$ and $c_{n-3}<0$.  By~\eqref{j4},
$c_{n-4}+(n-3)c_{n-3}=0$.  Since $2(n-3)>n$ when $n\ge7$, we have $c_{n-3}=-1$
and $c_{n-4}=n-3$.  Now~\eqref{j5} implies $c_{n-5}=-(n-3)(n-4)/2$.
Thus $H$ has at least $1+(n-3)(n-4)/2$ vertices, but
$n\ge 1+(n-3)(n-4)/2$ requires $n\le 7$.  Hence $n=7$ and $H$ has degree list
exactly $(4,2,2,2,2,2,2)$, and $G$ has no vertices of degree $2$ or at least
$4$.  Furthermore $c_{n-4}=n-3$, so $G$ has exactly four vertices with 
degree $3$ and cannot reach the same degree-sum as $H$.

\bigskip
{\bf Case 2:} $h=n-2$.
Now $c_{n-1}=0$ and $c_{n-2}<0$.  Let $c_{n-2}=-r$.
By~\eqref{j4}, $c_{n-4}+{(n-3)c_{n-3}}=r\CH{n-2}{2}$,
so $\FR{c_{n-4}}{n-3}+c_{n-3}=(n-2)\FR r2$.  With $r\ge2$ and $n\ge7$ and
$c_{n-4}+c_{n-3}\le n$, this can only be satisfied when $r=2$, $c_{n-3}=n-2$,
and $c_{n-4}=0$.  Since $m\le \FR12\CH n2$, the degree-sum is at most $\CH n2$;
hence $(n-2)(n-3)\le \FR12 n(n-1)$, which requires $n<8$.  Since we have
obtained $(c_5,c_4,c_3)=(-2,5,0)$, \eqref{j5} yields
$4c_2=\CH 53\cdot 2-2\CH 42\cdot 5=-40$; this
requires $c_2=-10$, a contradiction when $n=7$.  Hence we conclude $r=1$.

With $r=1$, we have $c_{n-4}+(n-3)c_{n-3}=\binom{n-2}{2}$.
Hence
\begin{equation}\label{cn4}
c_{n-3}=\FR{n-2}2-\FR{c_{n-4}}{n-3}.
\end{equation}
Substituting into~\eqref{j5} yields
\begin{align}\label{cn5}
4c_{n-5}&=\CH{n-2}3-2\CH{n-3}2c_{n-3}-3(n-4)c_{n-4}\nonumber\\
&=-\FR{(n-2)(n-3)(n-4)}3-2(n-4)c_{n-4}
\end{align}
Since $c_{n-3}$ must be an integer, by~\eqref{cn4} there are not many
possibilities for $c_{n-4}$.  Let $t=\FR{c_{n-4}}{n-3}$.  Since
$\C{c_{n-4}}\le n$, we have $t\in \{1,0,-1\}$ when $n$ is even, and
$t\in\{1/2,-1/2\}$ when $n$ is odd.  Also~\eqref{cn5} simplifies to
$-c_{n-5}=\FR{(n-3)(n-4)}{12}[n-2+6t]$.

With $c_{n-5}\ge -n$ and $n\ge7$, the possibilities that remain for $(n,t)$
are $(7,-1/2)$, $(8,-1)$, and $(10,-1)$.  Note that $c_{n-3}=\FR{n-2}2-t$.
In the even cases, $c_{n-3}=n/2$.  When $n=10$, five $7$-vertices are
together incident to at least $25$ edges, which is more than $\FR12\CH{10}2$.

When $n=8$, four $5$-vertices in $G$ are together incident to at
least $14$ edges, which is the maximum allowed, so there can be no other edges
or other $5$-vertices, the four $5$-vertices induce $K_4$, and
eight edges join these vertices to the rest.  Since
$c_{n-4}=-5$, in $H$ the vertices of degree at least $4$ already contribute
$26$ to the degree-sum, so $H$ has no $3$-vertex.  With $c_{n-5}=0$,
also $G$ has no $3$-vertex.  Hence the degree list of $G$ is
$(5,5,5,5,2,2,2,2)$.  With $(c_5,c_4,c_3)=(4,-5,0)$, applying~\eqref{phidiff}
with $j=2$ now yields $c_2=5$, a contradiction.

When $n=7$, we have $t=-1/2$, and $(c_2,c_3,c_4,c_5,c_6)=(-2,-2,3,-1,0)$.
Hence $\SE i26 ic_i= -3$, and having equal degree-sum requires $c_1=3$.  Now
$H$ has six vertices with degrees $(5,3,3,2,2,0)$ and $G$ has six vertices with
degrees $(4,4,4,1,1,1)$, and they each have one more vertex of the same odd
degree.  Since the degree list of $G$ must be realizable, the only choice is
$(4,4,4,3,1,1,1)$ for $G$ and $(5,3,3,3,2,2,0)$ for $H$.  Now $G$ is realized
only by adding three pendant edges to $K_4$, so $K_4$ is a card in $\cD$, which
can be obtained from $H$ only on the four vertices of high degree.  Thus $H$
consists of copies of $K_4$ and $K_3$ sharing one vertex, plus an isolated
vertex.  Being the union of three complete graphs, $H$ has no independent set
of size $4$, but $G$ does have such a set, so their $4$-decks cannot be equal.

%

\bigskip
{\bf Case 3:} $h=n-1$.  If $c_{n-2}\ge \FR{n+1}3$, then
$m\ge \FR{n+1}3(n-2)-\FR12\FR{n+1}3\FR{n-2}3=\FR 5{18}(n+1)(n-2)$.
Since this exceeds $\FR12\CH n2$ when $n\ge 7$, we conclude $c_{n-2}\le n/3$.

Let $r=-c_{n-1}$.  If $r\ge2$, then~\eqref{j4} and $c_{n-2}\le n/3$ together
yield $c_{n-4}+(n-3)c_{n-3}\ge 2\CH{n-1}3-\FR n3\CH{n-2}2=\FR{(n-2)^2(n-3)}6$.
The contribution to degree-sum in $G$ from vertices of degrees $n-4$ and $n-3$
is now at least $\FR{(n-2)^2(n-3)}6$, which exceeds $\CH n2$ when $n\ge8$.
Hence $n=7$, but then having two $6$-vertices in $H$ requires at
least $11$ edges (more than $\FR12\CH 72$), a contradiction.  Thus we may
assume $r=1$.

With $r=1$, \eqref{j4} yields
$c_{n-4}+(n-3)c_{n-3}+\CH{n-2}{2}c_{n-2}=\CH{n-1}{3}$.  If $c_{n-2}\leq0$, then
$c_{n-4}+(n-3)c_{n-3}\geq \binom{n-1}{3}$.  Dividing by $n-3$ and using
$c_{n-4}+c_{n-3}\le n$ yields $n\ge \FR{(n-1)(n-2)}6$, which requires $n<9$.
If $n=8$, then $c_6\le0$ simplifies \eqref{j4} to $c_4+5c_5\ge35$, but
$a_i\ge c_i$ and $m\le \FR12\CH n2$ yield $28\ge 4a_4+5a_5\ge c_4+5c_5$.  
If $n=7$, then~\eqref{j4} simplifies to $c_3+4c_4\geq20$, but $m\le10$ yields
$3a_3+4a_4\le 20$.  Since $a_i\ge c_i$, we conclude $c_3\le 0$ and $c_4\ge5$.
With at most $10$ edges, $G=K_5+2K_1$.  Now $\cD$ has five cards that are
$K_4$.  With only four edges not incident to its dominating vertex, $H$ cannot
have five such cards.  We conclude $c_{n-2}\ge1$.

With $a_{n-2}\geq c_{n-2}\geq1$, we now break into subcases by the value
of $c_{n-2}$.  We have already proved $c_{n-2}\le n/3$.  Let
$x=\FR{n-1}3-c_{n-2}$, so $x\ge -1/3$ and~\eqref{j4} yields
\begin{equation}\label{x}
c_{n-4}+(n-3)c_{n-3}=x\CH{n-2}2.
\end{equation}
Substituting~\eqref{x} into~\eqref{j5} yields
\begin{equation}\label{n5}
c_{n-5}=\frac{1}{72}(n-3)(n-4)[36c_{n-3}-(n-2)(24x+n-1)]. 
\end{equation}

\smallskip
{\it Subcase 3.1: $x\ge1$.}
If $c_{n-3}\ge\FR{n-2}2$, then with $a_{n-2}\ge1$ the vertices of degrees $n-2$
and $n-3$ in $G$ are incident to at least $\FR {n-2}2(n-3)+(n-2)-\CH{n/2}2$
edges.  Hence $m\ge \FR{(n-2)(3n-4)}8$; this exceeds $\FR12\CH n2$ when $n\ge7$.
If $c_{n-3}\le \FR{n-3}2$, then
$c_{n-4}\geq\CH{n-2}2-(n-3)\FR{n-3}2=\FR{n-3}2$, by~\eqref{x}.  Also
$c_{n-3}\ge1$, since otherwise~\eqref{x} yields $c_{n-4}\ge\CH{n-2}2\ge n$.
If $a_{n-3}=1$, then $c_{n-4}\ge\CH{n-2}2-(n-3)=\FR{(n-3)(n-4)}2$,
again too many vertices when $n\ge7$ (since $a_{n-2}\ge1$).

Hence $a_{n-3}\ge2$.  Now $m\ge {\FR{n+3}2}(n-4)+4-\CH{(n+3)/2}2$.
This quantity exceeds $\FR12\CH n2$ when $n\ge9$.  For $n=8$, we have 
$a_4\ge 3$, $a_5\ge 2$, $a_6\ge1$, yielding degree-sum already $28$,
so $G$ has degree list $(6,5,5,4,4,4,0,0)$, but degree $6$ forbids two
isolated vertices.  For $n=7$, we have $a_{n-4}\ge2$, so even degree-sum
at most $20$ requires degree list $(5,4,4,3,3,1,0)$.  To avoid higher
degree-sum, $a_i=c_i$ for $i\in\{5,4,3,1\}$.  Hence $b_i=0$ for these
values.  Now $H$ having one $6$-vertex requires $b_2=7$ to reach
degree-sum $20$, contradicting $n=7$.

\smallskip
{\it Subcase 3.2: $x\in\{\FR23,\FR13\}$.}
If $c_{n-3}\le 2$, then $c_{n-5}<-n$ when $n\ge9$ by~\eqref{n5},
a contradiction.  If $x=\FR23$, then $c_{n-2}=\FR{n-3}3\in\NN$, so $n\ge9$.
If $x=\FR13$, then $c_{n-2}=\FR{n-2}3\in\NN$, so $n\ge8$.
Setting $n=8$ and $x=\FR13$ and $c_{n-3}\le1$ in~\eqref{n5} yields
$c_{n-5}\le -15$, so $c_{n-3}=2$.  Now~\eqref{x} yields $c_{n-4}+5\cdot 2=5$,
so $c_4=-5$.  With $c_3+3c_4+5c_5+5c_6=0$ by~\eqref{j5}, we have $c_3=-5$.
Now $H$ has at least $10$ vertices, a contradiction.

Hence $c_{n-3}\geq3$.  Since also $c_{n-2}\geq\frac{n-3}{3}$,
the number of edges in $G$ incident to vertices of degree at least
$n-3$ is at least $\FR{n+6}3(n-2)-3-\CH{(n+6)/3}2$, which
simplifies to $\FR5{18}(n+6)(n-3)-3$ and is more than $\FR12\CH n2$ when 
$n\ge8$.

\smallskip
{\it Subcase 3.3: $x=0$.}
Note that $c_{n-2}=\frac{n-1}{3}\in \mathbb{Z}$.  By~\eqref{x},
$c_{n-4}=-(n-3)c_{n-3}$, so $-1\leq c_{n-3}\leq1$.  By~\eqref{n5},
$c_{n-5}= \FR{(n-3)(n-4)}{72}[36c_{n-3}-(n-1)(n-2)]$.  With $c_{n-3}\le1$,
this yields $c_{n-5}<-n$ when $n\ge10$, a contradiction.  Since
$c_{n-2}\equiv 1\mod3$, only $n=7$ remains.

With $n=7$, the expressions above reduce to
$c_5=2$, $c_3=-4c_4$, and $c_2=6c_4-5$, with $-1\leq c_4\leq1$.
If $c_4=-1$, then $c_2=-11<-7$.  If $c_4=1$, then $G$ has three vertices
of degrees $4$ and $5$ such that the number of edges incident to them is at
least $3\cdot 4+2-\CH 32$, which equals $11$ and exceeds $\FR12\CH 72$.

The remaining case is $c_4=c_3=0$ and $c_2=-5$, also $c_5=2$ and $c_6=-1$.
Since we know the $2$-deck, $G$ and $H$ have the same degree-sum;
that is, $\SE i06 ic_i=0$.  We have $\SE i06 ic_i = c_1-10+10-6$; hence
$c_1=6$.  Now $a_5\ge2$ and $a_1\ge6$, which contradicts $n=7$.

\smallskip
{\it Subcase 3.4:} $x=-\FR13$.
Here $c_{n-2}=\frac{n}{3}$, so $n\ge9$.  The number of edges incident to 
vertices of degree at least $n-2$ in $G$ is at least
$\FR n3(n-2)-\FR12\FR n3\FR{n-3}3$, which exceeds $\FR12\CH n2$ when $n>9$ and
equals it when $n=9$.  For $n=9$ with $x=-\FR13$, \eqref{x} reduces to
$c_{5}+6c_6=-7$ and \eqref{n5} reduces to $c_4=15c_6$, which requires $c_6=0$.
Hence $b_5\ge -c_5=7$, which with $b_8=1$ gives $H$ degree-sum at least $43$,
contradicting $m=18$.
\end{proof}

Using Theorem~\ref{degn3}, we present an alternative proof of the result by
Chernyak on the threshold for $2$-reconstructibility of the degree list.

\begin{corollary}[Chernyak~\cite{Che}]\label{deg6}
The degree list of an $n$-vertex graph is $2$-reconstructible whenever
$n\ge6$, and this is sharp.
\end{corollary}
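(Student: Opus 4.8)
The plan is to deduce the corollary from Theorem~\ref{degn3} for $n\ge7$ and to settle $n=6$ by a short direct computation. For $n\ge7$, Theorem~\ref{degn3} gives that the degree list is determined by the $(n-3)$-deck; by Observation~\ref{k-1} the $(n-2)$-deck determines the $(n-3)$-deck, so the degree list is determined by the $(n-2)$-deck as well, i.e.\ it is $2$-reconstructible. (In other words, $3$-reconstructibility of the degree list implies $2$-reconstructibility.) Hence only $n=6$ requires a separate argument; one could instead invoke the computer verification of McMullen and Radziszowski~\cite{MR} that every graph with $6\le n\le9$ vertices is $2$-reconstructible, but the case $n=6$ is easy to run by hand along the lines of the proof of Theorem~\ref{degn3}.

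So suppose $n=6$ and $G,H$ are two graphs with the same $4$-deck; I would keep the notation $a_i,b_i,c_i=a_i-b_i$ from that proof, and assume (analyzing the complements otherwise) that $m\le\frac12\binom62$, so the common degree-sum $2m$ of $G$ and $H$ satisfies $2m\le14$. Since $\Delta\le5$, we have $c_i=0$ for $i\ge6$. Specializing the difference equation~\eqref{phidiff} (now with $l=2$, so $k=4$) to $j=n-3=3$ and to $j=n-4=2$ yields $c_3+4c_4+10c_5=0$ and $c_2+2c_3+2c_4=0$. By Corollary~\ref{manvel}, if $G$ and $H$ have different degree lists then $c_h\ne0$ for some $h\ge k=4$, so $h\in\{4,5\}$; by the symmetry between $G$ and $H$ we may assume $c_h<0$.

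If $h=4$, then $c_5=0$, so the first relation gives $c_3=4|c_4|\ge4$ and then the second gives $c_2=-2(c_3+c_4)=-6|c_4|\le-6$; now $H$ has at least $b_2+b_4\ge7$ vertices, a contradiction. If $h=5$, write $c_5=-r$ with $r\ge1$; then $c_3+4c_4=10r$ and $c_2=6c_4-20r$, and $H$ has a vertex of degree $5$ since $b_5=a_5+r\ge1$. From here I would run through the few feasible values of $c_4$: $c_4\le0$ forces $c_3\ge10$ and hence too many $3$-vertices in $G$; $c_4\ge4$ forces $a_4\ge4$ and hence $2m\ge16$ (or, when $c_3<0$ is large, $b_3$ is large and $2m\ge3b_3>14$); and each remaining small value of $c_4$ makes either $-c_2$ exceed $6$ (too many $2$-vertices in $H$) or the degree-sum of $H$ exceed $14$. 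Every branch ends in a contradiction, so $G$ and $H$ have the same degree list, finishing $n=6$.

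For sharpness, the $5$-vertex graphs $C_4+K_1$ and $K'_{1,3}$ of Section~\ref{sec:intro} have the same $3$-deck --- four copies of $P_3$, four copies of $P_2+P_1$, and two copies of $3P_1$ --- yet their degree lists are $(2,2,2,2,0)$ and $(3,2,1,1,1)$; thus the degree list is not $2$-reconstructible when $n=5$, and the hypothesis $n\ge6$ cannot be weakened. The only step that needs real care is the case $h=5$ of the $n=6$ analysis, which is a miniature version of the casework in the proof of Theorem~\ref{degn3}: one checks that each small admissible value of $c_4$ is eliminated by combining $2m\le14$, the nonnegativity of the $a_i$ and $b_i$, and the fact that $G$ and $H$ have only six vertices.
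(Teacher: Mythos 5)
Your proposal is correct, and its skeleton matches the paper's: reduce $n\ge 7$ to Theorem~\ref{degn3} via Observation~\ref{k-1}, exhibit $C_4+K_1$ and $K'_{1,3}$ for sharpness (your $3$-deck enumeration is right), and for $n=6$ pass to complements so that $m\le 7$, invoke Corollary~\ref{manvel} to get a largest $h\ge 4$ with $c_h<0$, and split into $h=4$ and $h=5$ using the $\phi(3)$ identity $c_3+4c_4+10c_5=0$. Where you genuinely diverge is in also deploying the $\phi(2)$ identity $c_2+2c_3+2c_4=0$ (which is indeed the correct specialization of Lemma~\ref{deg} at $j=2$, $k=4$, $n=6$, since $\binom{0}{1}=0$ kills the $i=5$ term). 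This extra linear relation lets you close the case $h=4$ by pure vertex counting: $c_4=-1$ forces $c_3=4$ and then $c_2=-6$, so $b_2+b_4\ge 7>6$. The paper, using only \eqref{2rec}, cannot finish $h=4$ numerically and instead must pin down the degree lists $(3,3,3,3,x,y)$ versus $(4,2,2,2,2,2)$ and argue structurally about specific cards ($K_4$ appearing as a card of $G$ but not of $H$, and a five-edge card of $G$ that $H$ cannot produce). Your route trades that structural analysis for one more counting equation, which is arguably cleaner; the paper's route shows the counting can be kept to a single identity at the cost of examining actual reconstructions. Your $h=5$ case is only sketched, but every branch you name does terminate as claimed ($c_4\le 0$ gives $a_3\ge 10$; $c_4\ge 4$ gives degree-sum $\ge 16$ in $G$; $c_4\in\{1,2\}$ gives $-c_2\ge 8$; $c_4=3$, $r=1$ gives $H$ degree-sum at least $5+6+4=15$), so there is no gap, though for a final write-up you should display those four verifications explicitly rather than describe them.
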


\begin{proof}
Since the $(n-2)$-deck determines the $(n-3)$-deck, it is immediate from
Theorem~\ref{degn3} that the degree list is $2$-reconstructible when $n\ge7$.
By the example of $C_4+K_1$ and $K'_{1,3}$, $n\ge5$ is not sufficient.
It remains only to consider $n=6$.

Let $G$ and $H$ be two $6$-vertex graphs having the same $4$-deck $\cD$ but
different degree lists.  Let $m=\C{E(G)}=\C{E(H)}$ (we know the $2$-deck).
Since the $k$-deck determines the $k$-deck of the complement and $\CH 62=15$,
we may assume $m\le7$.  Define $a_i,b_i,c_i,h$ as in Theorem~\ref{degn3}.
That is, with $k=4$, different degree lists in $G$ and $H$ require a largest
$h$ with $h\ge k$ such that $a_h\ne b_h$, and by symmetry we have
$c_h=a_h-b_h<0$.  We use the equation for $\phi(3)$, which counts
dominating vertices in the cards of the $4$-deck:
\begin{equation}\label{2rec}
c_{3}+4c_4+10c_5=0 .
\end{equation}

\medskip
{\bf Case 1:} $h=5$.
We have $-c_5=1$, because two $5$-vertices in $H$ already force
$m\ge9$.  Thus $4c_4+c_3=10$, by~\eqref{2rec}.  If $c_4\geq3$, then 
$m\ge 3\cdot 4-\CH 32=9$.  If $c_4=2$, then also $c_3=2$ and
$m\ge 2\cdot 4+2\cdot 3-\CH 42=8$.  However, $m\le 7$.  If
$c_4<2$, then $G$ has too many vertices.

\medskip
{\bf Case 2:} $h=4$.  Here $c_3=-4c_4$.  With $n=6$, we have $c_4=-1$ and
$c_3=4$.  With degree-sum at most $14$, the degree list of $G$ is
$(3,3,3,3,x,y)$ with $(x,y)\in\{(2,0),(1,1),(0,0)\}$.  Thus also $b_4=1$ and
$b_3=0$, so $H$ has only one vertex with degree exceeding $2$.  If
$(x,y)=(0,0)$, then $G=K_4+2K_1$ and $K_4$ is a card, but $K_4$ is not
contained in $H$.

Hence $m=7$, and the degree list of $H$ must be $(4,2,2,2,2,2)$.  The only such
graph consists of a $4$-cycle and a $3$-cycle with one common vertex.  Every
card of $H$ has at most four edges.  Whether $(x,y)$ is $(1,1)$ or $(2,0)$,
deleting from $G$ the two vertices of smallest degree eliminates at most two
edges and leaves a card with five edges, a contradiction.
\end{proof}

\section{$3$-reconstructibility of connectedness}

Using Theorem \ref{degn3}, we prove Theorem \ref{conn}.  Again the example of
$C_5+P_1$ and $K''_{1,3}$ shows that the threshold on $n\ge7$ is sharp; they
have the same $3$-deck, but only one is connected.

\begin{theorem*}[{\bf 1.5}]
For $n\ge7$, connectedness is $3$-reconstructible for $n$-vertex graphs,
and the threshold on $n$ is sharp.
\end{theorem*}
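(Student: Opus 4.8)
The plan is to argue by contradiction, using Theorem~\ref{degn3} to control degrees and then a counting argument on ``connected cards''. Suppose $G$ and $H$ have the same $(n-3)$-deck $\mathcal D$ but exactly one of them, say $H$, is connected while $G$ is not. By Theorem~\ref{degn3}, $G$ and $H$ have the same degree list, and by iterating Observation~\ref{k-1} down to the $2$-deck they have the same number $m$ of edges. If the common degree list contains a $0$, then $H$ has an isolated vertex and is disconnected, a contradiction; so I may assume $\delta(G)\ge1$ and $\delta(H)\ge1$, and in particular every component of $G$ has at least two vertices.

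The key tool is a lower bound on the number of \emph{connected cards} of a graph, meaning triples $S$ of vertices for which the induced subgraph on $V\setminus S$ is connected. I claim that a connected graph on $n\ge7$ vertices has at least four connected cards. Indeed, fix a spanning tree $T$ with $\ell$ leaves. If $\ell\ge4$, then deleting any three of its leaves leaves a subtree, giving at least $\binom{\ell}{3}\ge4$ such triples. If $\ell=2$, then $T$ is a Hamiltonian path and trimming its two ends in each of the four ways to write $3=i+j$ works. If $\ell=3$, then $T$ is a spider whose three leg-lengths sum to $n-1\ge6$, and removing the last $s_i$ vertices of leg $i$ works whenever $s_1+s_2+s_3=3$ with each $s_i$ at most the length of leg $i$; checking the three partitions of $6$ into three parts shows at least four admissible triples always exist. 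Hence $\mathcal D$ contains at least four connected cards, counted with multiplicity.

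Next I exploit that $G$ is disconnected. Since $\mathcal D$ has a connected card and $G$ is disconnected, some card $G-S$ is connected, and this can happen only if all but one of $G$'s components are entirely contained in $S$; as each component has at least two vertices and $|S|=3$, this forces $G$ to have exactly two components $G_1$ and $G_2$ with $G_2$ connected and $|V(G_1)|\in\{2,3\}$. If $|V(G_1)|=3$, then the unique connected card of $G$ is obtained by deleting exactly $V(G_1)$ and equals $G_2$, so $\mathcal D$ has only one connected card, contradicting the previous paragraph. The remaining case is $G=K_2+G_2$ with $G_2$ connected on $n-2\ge5$ vertices; here a card is connected precisely when it equals $G_2-v$ for a non-cut vertex $v$ of $G_2$, so $\mathcal D$ has as many connected cards as $G_2$ has non-cut vertices, and since $\mathcal D$ has at least four connected cards, $G_2$ has at least four non-cut vertices.

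The final case is where the real difficulty lies, since now both $G$ and $H$ produce many connected cards, so the comparison must become quantitative. My plan is first to compare edge counts. Every connected card $G_2-v$ of $G$ has at most $m-2$ edges, because deleting the $K_2$ removes one edge and the non-cut vertex $v$ (of degree at least $1$) removes at least one more; meanwhile every connected card $H-S$ has at most $m-e(S)$ edges, where $e(S)$ is the number of edges meeting $S$, and one checks that $e(S)\ge3$ whenever $H$ is connected with $\delta(H)\ge1$ (the cases $e(S)\le2$ would force $H$ to be disconnected). So no connected card has more than $m-3$ edges, which forces every non-cut vertex of $G_2$ to have degree at least $2$ in $G_2$; as degree-one vertices are always non-cut, $\delta(G_2)\ge2$, so $G$ has exactly two vertices of degree $1$ (the ends of the $K_2$), hence so does $H$, and the two leaves of $H$ must be nonadjacent. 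I would then finish by counting the cards that contain an isolated vertex, classified by which vertex becomes isolated: in $G$ exactly $2\binom{n-2}{2}$ cards have one of the two $K_2$-endpoints isolated (and no card has two such), plus a contribution bounded by the number of degree-$2$ and degree-$3$ vertices of $G_2$; in $H$ the analogous count, organized around the at most two leaves (whose isolating triples are forced to overlap) and the degree-$2$ and degree-$3$ vertices, comes out strictly smaller, and together with the known degree list this is contradictory for all $n\ge7$ apart from a few small values (essentially $n\in\{7,8\}$), which are checked directly. Sharpness is exhibited, as already noted, by $C_5+K_1$ and $K''_{1,3}$, which have the same $3$-deck but differ in connectedness.
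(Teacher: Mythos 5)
Your structural setup is sound and close to the paper's: after invoking Theorem~\ref{degn3} you correctly reduce to the case where the disconnected graph is $K_2$ plus a connected graph $G_2$ on $n-2$ vertices, and your two auxiliary observations (a connected graph on $n\ge7$ vertices has at least four connected cards, and every $3$-set $S$ of vertices in a connected graph on at least seven vertices meets at least three edges) are both correct and give clean alternative justifications of the paper's steps that the large component has $n-2$ vertices and that it has no $1$-vertex. Up to the conclusion that both graphs have exactly two vertices of degree $1$ and that $\delta(G_2)\ge2$, your argument is complete and reaches the same intermediate state as the paper.

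The gap is the final step, which is where essentially all of the difficulty lies, and the count you propose cannot work as described. The number of pairs $(S,w)$ with $w$ isolated in the card obtained by deleting $S$ is exactly the quantity $\phi(0)$ of Lemma~\ref{deg}, namely $\sum_i a_i\binom{n-1-i}{3-i}$, which depends only on the degree list; since the two graphs have the same degree list, this count is automatically equal for both graphs and yields no contradiction. (Your tally of $2\binom{n-2}{2}$ for the two $1$-vertices plus $n-3$ per degree-$2$ vertex plus $1$ per degree-$3$ vertex is precisely this formula.) To extract information you would have to count cards with \emph{at least one} isolated vertex, i.e., control the overlaps --- which triples isolate two vertices simultaneously --- and your assertion that the count for the connected graph ``comes out strictly smaller'' is neither proved nor obviously true; it depends on adjacency structure you have not pinned down. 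The paper instead counts the cards with exactly $m-3$ edges, shows they localize to the two pendant paths of the connected graph (giving at most four such cards), and deduces that (almost) every vertex of the large component of the disconnected graph must have a neighbor of degree $2$, which forces $n\le6$. You would need an argument of comparable substance here, and the deferred ``direct check'' of the small cases $n\in\{7,8\}$ is also not carried out.
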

\begin{proof}
Suppose that $n$-vertex graphs $G$ and $H$ have the same $(n-3)$-deck $\cD$,
but that $G$ is connected and $H$ is disconnected.  Let $m$ be the common
number of edges in $G$ and $H$.  Let $C$ be the largest component in $H$.
Since $G$ is connected, it has a spanning tree $T$.  Since $n\ge7$,
$T$ has at least two connected cards.  Thus $\cD$ has at least two connected
cards, so $C$ has at least $n-2$ vertices.

By Theorem~\ref{degn3}, $G$ and $H$ have the same degree list.  Since $G$ is
connected, $H$ cannot have an isolated vertex, so $H=C+K_2$.  If $C$ has a
$1$-vertex, then deleting it and the vertices of the small component in $H$
leaves a card in $D$ with $m-2$ edges.  However, since $G$ is connected, it is
not possible to delete three vertices in $G$ and only remove two edges.  Hence
$C$ has no $1$-vertex, which means that $G$ and $H$ each have exactly two
$1$-vertices.  Let $u$ and $v$ be the $1$-vertices in $G$, and let $Y$ be the
set of $1$-vertices in $H$.

Let $x$ be the number of $2$-vertices in both $G$ and in $H$.
If $x=0$, then $C$ has minimum degree at least $3$.  Deleting $Y$ and one
vertex of $C$ from $H$ now yields $n-2$ cards with minimum degree at least $2$.
Such cards can arise from $G$ only by deleting the two $1$-vertices
and one other vertex.  Hence $G-\{u,v\}$ and $C$ have the same $(n-3)$-deck.
They must therefore have the same number of edges.  However, $C$ has $m-1$
edges, while $G-\{u,v\}$ has $m-2$ edges.  Thus $x>0$.

To eliminate only three edges from $H$ when deleting three vertices, one
must delete $Y$ and a $2$-vertex of $C$.  Thus $x$ is also the number
of cards in $\cD$ with $m-3$ edges.  We show the remaining possibilities for
$G$ in Figure~1.

\begin{figure}[hbt]
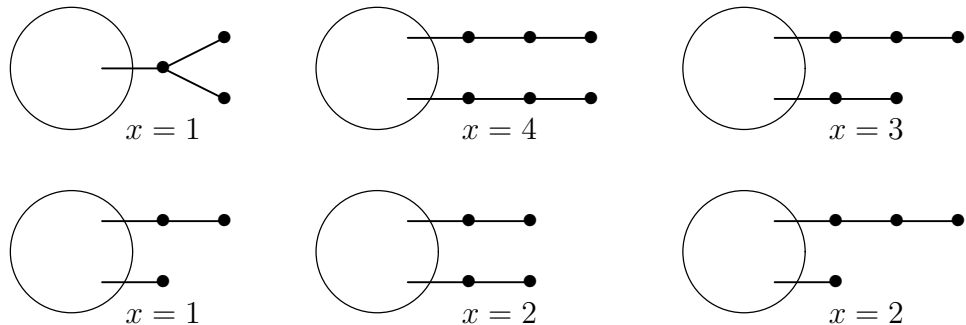

\begin{center}
\gpic{
\expandafter\ifx\csname graph\endcsname\relax \csname newbox\endcsname\graph\fi
\expandafter\ifx\csname graphtemp\endcsname\relax \csname newdimen\endcsname\graphtemp\fi
\setbox\graph=\vtop{\vskip 0pt\hbox{%
    \graphtemp=.5ex\advance\graphtemp by 0.320in
    \rlap{\kern 0.800in\lower\graphtemp\hbox to 0pt{\hss $\bu$\hss}}%
    \graphtemp=.5ex\advance\graphtemp by 0.480in
    \rlap{\kern 1.120in\lower\graphtemp\hbox to 0pt{\hss $\bu$\hss}}%
    \graphtemp=.5ex\advance\graphtemp by 0.160in
    \rlap{\kern 1.120in\lower\graphtemp\hbox to 0pt{\hss $\bu$\hss}}%
    \special{pn 11}%
    \special{pa 800 320}%
    \special{pa 1120 480}%
    \special{fp}%
    \special{pa 800 320}%
    \special{pa 1120 160}%
    \special{fp}%
    \special{pa 800 320}%
    \special{pa 480 320}%
    \special{fp}%
    \special{pn 8}%
    \special{ar 320 320 320 320 0 6.28319}%
    \graphtemp=.5ex\advance\graphtemp by 0.656in
    \rlap{\kern 0.800in\lower\graphtemp\hbox to 0pt{\hss $x=1$\hss}}%
    \graphtemp=.5ex\advance\graphtemp by 1.440in
    \rlap{\kern 0.800in\lower\graphtemp\hbox to 0pt{\hss $\bu$\hss}}%
    \graphtemp=.5ex\advance\graphtemp by 1.120in
    \rlap{\kern 0.800in\lower\graphtemp\hbox to 0pt{\hss $\bu$\hss}}%
    \graphtemp=.5ex\advance\graphtemp by 1.120in
    \rlap{\kern 1.120in\lower\graphtemp\hbox to 0pt{\hss $\bu$\hss}}%
    \special{pn 11}%
    \special{pa 480 1440}%
    \special{pa 800 1440}%
    \special{fp}%
    \special{pa 480 1120}%
    \special{pa 1120 1120}%
    \special{fp}%
    \special{pn 8}%
    \special{ar 320 1280 320 320 0 6.28319}%
    \graphtemp=.5ex\advance\graphtemp by 1.616in
    \rlap{\kern 0.800in\lower\graphtemp\hbox to 0pt{\hss $x=1$\hss}}%
    \graphtemp=.5ex\advance\graphtemp by 0.480in
    \rlap{\kern 2.400in\lower\graphtemp\hbox to 0pt{\hss $\bu$\hss}}%
    \graphtemp=.5ex\advance\graphtemp by 0.480in
    \rlap{\kern 2.720in\lower\graphtemp\hbox to 0pt{\hss $\bu$\hss}}%
    \graphtemp=.5ex\advance\graphtemp by 0.480in
    \rlap{\kern 3.040in\lower\graphtemp\hbox to 0pt{\hss $\bu$\hss}}%
    \graphtemp=.5ex\advance\graphtemp by 0.160in
    \rlap{\kern 2.400in\lower\graphtemp\hbox to 0pt{\hss $\bu$\hss}}%
    \graphtemp=.5ex\advance\graphtemp by 0.160in
    \rlap{\kern 2.720in\lower\graphtemp\hbox to 0pt{\hss $\bu$\hss}}%
    \graphtemp=.5ex\advance\graphtemp by 0.160in
    \rlap{\kern 3.040in\lower\graphtemp\hbox to 0pt{\hss $\bu$\hss}}%
    \special{pn 11}%
    \special{pa 2080 480}%
    \special{pa 3040 480}%
    \special{fp}%
    \special{pa 2080 160}%
    \special{pa 3040 160}%
    \special{fp}%
    \special{pn 8}%
    \special{ar 1920 320 320 320 0 6.28319}%
    \graphtemp=.5ex\advance\graphtemp by 0.656in
    \rlap{\kern 2.560in\lower\graphtemp\hbox to 0pt{\hss $x=4$\hss}}%
    \graphtemp=.5ex\advance\graphtemp by 1.440in
    \rlap{\kern 2.400in\lower\graphtemp\hbox to 0pt{\hss $\bu$\hss}}%
    \graphtemp=.5ex\advance\graphtemp by 1.120in
    \rlap{\kern 2.400in\lower\graphtemp\hbox to 0pt{\hss $\bu$\hss}}%
    \graphtemp=.5ex\advance\graphtemp by 1.440in
    \rlap{\kern 2.720in\lower\graphtemp\hbox to 0pt{\hss $\bu$\hss}}%
    \graphtemp=.5ex\advance\graphtemp by 1.120in
    \rlap{\kern 2.720in\lower\graphtemp\hbox to 0pt{\hss $\bu$\hss}}%
    \special{pn 11}%
    \special{pa 2080 1440}%
    \special{pa 2720 1440}%
    \special{fp}%
    \special{pa 2080 1120}%
    \special{pa 2720 1120}%
    \special{fp}%
    \special{pn 8}%
    \special{ar 1920 1280 320 320 0 6.28319}%
    \graphtemp=.5ex\advance\graphtemp by 1.616in
    \rlap{\kern 2.560in\lower\graphtemp\hbox to 0pt{\hss $x=2$\hss}}%
    \graphtemp=.5ex\advance\graphtemp by 0.480in
    \rlap{\kern 4.320in\lower\graphtemp\hbox to 0pt{\hss $\bu$\hss}}%
    \graphtemp=.5ex\advance\graphtemp by 0.480in
    \rlap{\kern 4.640in\lower\graphtemp\hbox to 0pt{\hss $\bu$\hss}}%
    \graphtemp=.5ex\advance\graphtemp by 0.160in
    \rlap{\kern 4.320in\lower\graphtemp\hbox to 0pt{\hss $\bu$\hss}}%
    \graphtemp=.5ex\advance\graphtemp by 0.160in
    \rlap{\kern 4.640in\lower\graphtemp\hbox to 0pt{\hss $\bu$\hss}}%
    \graphtemp=.5ex\advance\graphtemp by 0.160in
    \rlap{\kern 4.960in\lower\graphtemp\hbox to 0pt{\hss $\bu$\hss}}%
    \special{pn 11}%
    \special{pa 4000 480}%
    \special{pa 4640 480}%
    \special{fp}%
    \special{pa 4000 160}%
    \special{pa 4960 160}%
    \special{fp}%
    \special{pn 8}%
    \special{ar 3840 320 320 320 0 6.28319}%
    \graphtemp=.5ex\advance\graphtemp by 0.656in
    \rlap{\kern 4.480in\lower\graphtemp\hbox to 0pt{\hss $x=3$\hss}}%
    \graphtemp=.5ex\advance\graphtemp by 1.440in
    \rlap{\kern 4.320in\lower\graphtemp\hbox to 0pt{\hss $\bu$\hss}}%
    \graphtemp=.5ex\advance\graphtemp by 1.120in
    \rlap{\kern 4.320in\lower\graphtemp\hbox to 0pt{\hss $\bu$\hss}}%
    \graphtemp=.5ex\advance\graphtemp by 1.120in
    \rlap{\kern 4.960in\lower\graphtemp\hbox to 0pt{\hss $\bu$\hss}}%
    \graphtemp=.5ex\advance\graphtemp by 1.120in
    \rlap{\kern 4.640in\lower\graphtemp\hbox to 0pt{\hss $\bu$\hss}}%
    \special{pn 11}%
    \special{pa 4000 1440}%
    \special{pa 4320 1440}%
    \special{fp}%
    \special{pa 4000 1120}%
    \special{pa 4960 1120}%
    \special{fp}%
    \special{pn 8}%
    \special{ar 3840 1280 320 320 0 6.28319}%
    \graphtemp=.5ex\advance\graphtemp by 1.616in
    \rlap{\kern 4.480in\lower\graphtemp\hbox to 0pt{\hss $x=2$\hss}}%
    \hbox{\vrule depth1.616in width0pt height 0pt}%
    \kern 5.000in
  }%
}%
}
\end{center}

\vspace{-1pc}
\caption{Possibilities for $G$ in Theorem~\ref{conn}.}
\end{figure}

If $u$ and $v$ have the same neighbor, $w$, then $G$ can have a card with $m-3$
edges only if $w$ has degree $3$ and the deleted set is $\{u,v,w\}$.  Hence in
this case $x=1$.

If $u$ and $v$ have different neighbors, then each of $u$ and $v$ is the end of
a maximal path containing no vertices of degree larger than $2$ in $G$; call
these paths $P(u)$ and $P(v)$.  We can only obtain a card with $m-3$ edges by
deleting $i$ vertices from $P(u)$ and $j$ vertices from $P(v)$, where $i+j=3$.
There are at most four choices for $i$, so $x\le4$.  In order to have exactly
$x$ cards with $m-3$ edges, there must be a total of $x$ vertices of degree $2$
on $P(u)\cup P(v)$ and hence no $2$-vertices elsewhere in $G$ (See Figure~1).

%
%
%
%
%
%
%
%
%
%

%


Now consider the cards of $G$ obtained by removing three vertices.  When
$x\ge2$, the paths $P(u)$ and $P(v)$ together have at least four vertices of
degree at most $2$, so removing any three vertices of $G$ leaves a vertex of
degree at most $1$.  Hence removing $Y$ and a vertex of $C$ from $H$ must also
leave a vertex of degree at most $1$.  This means that every vertex of $C$ has
a neighbor of degree $2$.  In the two possibilities when $x=1$, the one card of
$G$ with $m-3$ edges may have no vertex of degree at most $1$, but all other
cards must have such a vertex.  In this case every vertex of $C$ except
possibly one has a neighbor of degree $2$.

For $x\in\{3,4\}$, label $u$ and $v$ so that $|V(P(u))|\geq |V(P(v))|$.
Consider a card $D$ of $G$ with $m-3$ edges that is obtained by deleting $u$,
$v$ and the neighbor of $u$, so $D$ has two vertices of degree $1$ and $x-3$
vertices of degree $2$.  Since all $2$-vertices in $G$ are in $P(u)\cup P(v)$,
the other vertices in $D$ have degree at least $3$.  Note that $D$ must be a
vertex-deleted subgraph of $C$, since cards with $m-3$ edges are obtained from
$H$ only by deleting $Y$ and a vertex of $C$.  Since $C$ must have $x$ vertices
of degree $2$ and none of degree $1$, it must be formed from $D$ by adding one
vertex $z$ of degree $2$ whose neighbors are the two $1$-vertices in $D$.
Adding $z$ to form $C$ shows that the $2$-vertices in $C$
lie along a single path.  This means that only two vertices outside this path
can have neighbors of degree $2$.  Since every vertex of $C$ must have a
neighbor of degree $2$, we conclude that $C$ has at most two vertices outside
the path, but then those vertices cannot have degree greater than $2$, a contradiction.

When $x=2$, recall that every vertex in $C$ has a neighbor of degree $2$
(including the vertices of degree $2$).  Each vertex of degree $2$ is a
neighbor of only two vertices.  Hence $2=x\ge (n-2)/2$, so $n\le6$.
Similarly, when $x=1$, all but one vertex of $C$ has a neighbor of degree $2$,
so $1=x\ge (n-3)/2$, yielding $n\le 5$.

We have obtained contradictions in all cases, so such $G$ and $H$ do not exist.
\end{proof}

Using Theorem~\ref{conn}, Manvel's result on $2$-reconstructibilty of
connectedness follows quite easily.

\begin{corollary}[Manvel~\cite{Manvel}]
For $n\ge6$, connectedness of an $n$-vertex graph is $2$-reconstructible.
\end{corollary}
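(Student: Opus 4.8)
The plan is to separate the range $n\ge7$ from the single remaining case $n=6$. For $n\ge7$ there is essentially nothing to prove: by Observation~\ref{k-1} the $(n-2)$-deck determines the $(n-3)$-deck, and by Theorem~\ref{conn} the $(n-3)$-deck of an $n$-vertex graph with $n\ge7$ already determines whether the graph is connected. So the whole argument should be devoted to the base case $n=6$.

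For $n=6$ I would argue by contradiction: suppose $G$ and $H$ are $6$-vertex graphs with $\cD_4(G)=\cD_4(H)$, where $G$ is connected and $H$ is not. First invoke Corollary~\ref{deg6} (which applies since $n\ge6$) to conclude that $G$ and $H$ have the same degree list; as $G$ is connected it has no isolated vertex, so neither does $H$. Hence every component of $H$ has at least two vertices, so the list of component orders of $H$ is $(4,2)$, $(3,3)$, or $(2,2,2)$; in the first case the order-$2$ component is $K_2$, so $H=C+K_2$ with $C$ connected on four vertices. Now a connected $4$-vertex card of $H$ must lie inside a single component of $H$, hence must be all of a component of order $4$; therefore $\cD_4(H)$ contains \emph{at most one} connected card (and contains one only in the case $H=C+K_2$, namely $C$ itself). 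On the other hand, $\cD_4(G)$ contains \emph{at least two} connected cards: pick a vertex $u$ of $G$ that is not a cut vertex, so $G-u$ is connected on five vertices; then $G-u$ has at least two vertices $w_1,w_2$ that are not cut vertices of $G-u$, and $G-\{u,w_1\}$ and $G-\{u,w_2\}$ are two connected $4$-vertex cards of $G$. These two counts contradict $\cD_4(G)=\cD_4(H)$, which finishes the case $n=6$.

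I do not expect a serious obstacle. The case $n=6$ is the only part that requires any work, and within it the one point to handle carefully is simply to combine two easy facts correctly: that a connected $6$-vertex graph always has two connected $4$-vertex cards (produced via non-cut vertices, using that every nontrivial connected graph has at least two non-cut vertices), and that a disconnected $6$-vertex graph with no isolated vertex has at most one such card. Invoking Corollary~\ref{deg6} to forbid isolated vertices in $H$ is exactly what makes the second fact available.
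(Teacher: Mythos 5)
Your proof is correct and follows essentially the same route as the paper: reduce to $n=6$ via Theorem~\ref{conn}, use Corollary~\ref{deg6} to rule out isolated vertices in $H$, and then compare counts of connected $4$-cards ($H$ has at most one, $G$ has at least two). The only difference is that you certify the two connected cards of $G$ by successively deleting non-cut vertices, whereas the paper argues via spanning trees being paths; both steps are routine and valid.
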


\begin{proof}
Again $C_4+K_1$ and $K'_{1,3}$ give sharpness, and Theorem~\ref{conn} handles
$n\ge7$.  Consider connected and disconnected $6$-vertex graphs $G$ and $H$
with the same $4$-deck.

By Corollary~\ref{deg6}, $G$ and $H$ have the same degree list, so neither
has isolated vertices.  Since $G$ has a connected $4$-card, $H$ has a $4$-vertex
component $C$, and $H=C+K_2$.  Thus $H$ has only one connected $4$-card.

\nobreak
Now $G$ must also have only one connected $4$-card.  Therefore every spanning
tree of $G$ is a path, so $G$ is a path, but then $G$ has three connected
$4$-cards.
\end{proof}

\end{document}